\newcounter{propcounter}
\newtheorem{theorem}{Theorem}[section]
\newtheorem{prop}[theorem]{Proposition}
\newtheorem{lemma}[theorem]{Lemma}
\newtheorem{conj}[theorem]{Conjecture}
\newcommand{\repeatlabel}{}
\newtheorem*{repeatlemma}{Lemma \repeatlabel}
\theoremstyle{definition}
\newtheorem{defn}[theorem]{Definition}
\newtheorem{claim}[theorem]{Claim}
\newcommand{\ep}{\varepsilon}
\newcommand{\cF}{\mathcal{F}}
\newcommand{\De}{\Delta}
\title{Clique immersion in graphs without a fixed bipartite graph}
\author{Hong Liu\thanks{Extremal Combinatorics and Probability Group (ECOPRO), Institute for Basic Science (IBS), Daejeon, South
Korea, Email: {\tt hongliu@ibs.re.kr}, supported by the Institute for Basic Science (IBS-R029-C4).}, ~~~~~~~Guanghui Wang\thanks{School of Mathematics, Shandong University, Jinan, China, Email: {\tt ghwang@sdu.edu.cn}, supported by Natural Science Foundation of China (11631014) and Shandong University Multidisciplinary Research and Innovation Team of Young Scholars.}, ~~~~~~~Donglei Yang\thanks{\textbf{Correspondence}:~Data Science Institute, Shandong University, Shandong, China, Email: {\tt dlyang@sdu.edu.cn}, supported by the China Postdoctoral Science Foundation (2021T140413), Natural Science Foundation of China (12101365) and Natural Science Foundation of Shandong Province (ZR2021QA029).}
}
\date{}
\begin{document}

\maketitle
\begin{abstract}
	A graph $G$ contains $H$ as an \emph{immersion} if there is an injective mapping $\phi: V(H)\rightarrow V(G)$ such that for each edge $uv\in E(H)$, there is a path $P_{uv}$ in $G$ joining vertices $\phi(u)$ and $\phi(v)$, and all the paths $P_{uv}$, $uv\in E(H)$, are pairwise edge-disjoint. An analogue of Hadwiger's conjecture for the clique immersions by Lescure and Meyniel, and independently by Abu-Khzam and Langston, states that every graph $G$ contains $K_{\chi(G)}$ as an immersion.
We prove that for any constant $\ep>0$ and integers $s,t\ge2$, there exists $d_0=d_0(\ep,s,t)$ such that every $K_{s,t}$-free graph $G$ with $d(G)\ge d_0$ contains a clique immersion of order $(1-\ep)d(G)$. This implies that the above-mentioned  conjecture is asymptotically true for graphs without a fixed complete bipartite graph.
\end{abstract}



\section{Introduction}
A graph $H$ is a \emph{minor} of another graph $G$ if $H$ can be obtained from  $G$ via vertex deletions, edge deletions and edge contractions. A conjecture of Hadwiger \cite{hdwg} states that every graph $G$ with $\chi(G)\ge t$ contains $K_{t}$ as a minor. This conjecture is widely open for $\chi(G)\ge7$; the case $\chi(G)=5$ is equivalent to the celebrated Four-Color Theorem and the case $\chi(G)=6$ was solved by Robertson, Seymour and Thomas \cite{robert}.
A graph $H$ is a \emph{topological
minor} of another graph $G$ if there is an injective mapping $\phi: V(H)\rightarrow V(G)$ such that for each edge $uv\in E(H)$, there is a path $P_{uv}$ in $G$ joining vertices $\phi(u)$ and $\phi(v)$, and all the paths $P_{uv}$, $uv\in E(H)$, are pairwise internally vertex-disjoint. A stronger conjecture proposed by Haj\'{o}s in 1940's \cite{hajos} states that every graph $G$ contains $K_{\chi(G)}$ as a topological minor. However, this conjecture
is known to be false in general: Catlin \cite{cat} disproved this conjecture for all $\chi(G)\ge 7$. It is also natural to consider graphs with given average degree. In this direction, Kostochka \cite{kos} and independently, Thomason \cite{thom} proved that average
degree $d(G)=\Omega(t\sqrt{\log t})$ in a graph $G$ forces $K_t$ as a minor, and this bound is optimal. This remained the best order of magnitude for Hadwiger's conjecture until very recent breakthroughs by Norin, Postle and Song \cite{nps} and Postle \cite{pos}.

In this paper, we consider immersions,  first introduced by Nash-Williams \cite{nash}. Given two graphs $G$ and $H$, we say $G$ contains an $H$-\emph{immersion} if there exists an injective  mapping $\phi: V(H)\rightarrow V(G)$ such that for each edge $uv\in E(H)$, there is a path $P_{uv}$ in $G$ connecting $\phi(u)$ and $\phi(v)$; and
all the paths $P_{uv}$, $uv\in E(H)$, are pairwise edge-disjoint. We call
the vertices $\{\phi(v)\mid v\in V(H)\}$ the \emph{branch} vertices of the immersion.  As a weakening of topological minor, the immersion relation requires paths to be pairwise edge-disjoint rather than vertex-disjoint. Although graph minors and graph immersions are incomparable, Robertson and Seymour \cite{robert2} showed that graphs are well-quasi-ordered by immersion, analogous to their
celebrated graph minors project. An immersion variant of Hadwiger's conjecture was proposed by Lescure and Meyniel \cite{les-mey} \footnote{They in fact conjectured a stronger statement that the $K_t$-immersion can be embedded so that every path $P_{uv}$, $uv\in E(K_t)$ is internally vertex disjoint from the set of branch vertices.} in $1989$, and independently, by Abu-Khzam and Langston \cite{abu} in $2003$.
\begin{conj}\emph{\cite{abu, les-mey}}\label{conj-hadwiger-immersion}
	Every graph $G$ with $\chi(G)\ge t$ contains $K_t$ as an immersion.
\end{conj}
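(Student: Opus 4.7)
The plan is to attack Conjecture~\ref{conj-hadwiger-immersion} by first reducing it to a minimum-degree statement and then attempting to prove the latter via an edge-connectivity argument. For the reduction, note that if $\chi(G)\ge t$ then iteratively deleting any vertex of degree at most $t-2$ leaves the chromatic number at least $t$: a hypothetical proper $(t-1)$-colouring of the smaller graph would extend to the deleted vertex, contradicting $\chi(G)\ge t$. This produces a subgraph $G'\subseteq G$ with $\delta(G')\ge t-1$, and any $K_t$-immersion in $G'$ lifts trivially to one in $G$. The task thus reduces to showing that $\delta(G)\ge t-1$ forces a $K_t$-immersion.

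To carry this out I would try to select $t$ branch vertices $v_1,\ldots,v_t$ and construct the $\binom{t}{2}$ required edge-disjoint paths in a coordinated fashion. One natural starting point is to exploit the neighbourhood of a high-degree vertex $v$: the $\ge t-1$ edges at $v$ already provide $t-1$ single-edge paths from $v$ to candidates in $N(v)$, after which one recurses inside $G-v$ to connect the remaining pairs. A more global approach invokes Mader's edge-splitting theorem (or Nash-Williams' tree-packing theorem) to produce a set of $t$ vertices that are pairwise $(t-1)$-edge-connected in $G$, so that Menger's theorem supplies, between any two of them, the $t-1$ edge-disjoint paths needed for a single edge of the immersion; the remaining question is how to combine these per-pair path systems edge-disjointly across all $\binom{t}{2}$ pairs. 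A complementary angle, paralleling the dichotomy exploited in this paper, is to split into a sparse regime (where the absence of a fixed $K_{s,t}$ can be leveraged, giving the asymptotic $(1-\ep)d(G)$ bound announced in the abstract) and a dense regime (where extremal tools for clique immersion apply).

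The hard part is the global coordination of the $\binom{t}{2}$ path systems. Menger's theorem alone delivers $t-1$ edge-disjoint paths between any single fixed pair, but packing such systems simultaneously for all pairs is substantially harder, and currently the best general bounds are only of the form $\delta(G)\ge Ct$ for a moderately large absolute constant $C$, established by DeVos, Kawarabayashi, Mohar and Okamura and refined by subsequent authors. Pushing $C$ down to the conjectured value $1$ seems to require either a branch-vertex selection that genuinely uses the chromatic-number hypothesis beyond producing min-degree $t-1$, or a structural dichotomy analogous to the recent breakthroughs of Norin, Postle and Song on classical Hadwiger. For these reasons Conjecture~\ref{conj-hadwiger-immersion} remains open in full generality, and the realistic goal is to settle the asymptotic $K_{s,t}$-free version pursued in this paper, which is strong enough to recover the conjecture up to a $(1-\ep)$ factor in the sparse regime.
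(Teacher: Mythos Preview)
The statement you were asked to prove is a \emph{conjecture}, and the paper does not prove it; it is quoted as an open problem and the paper's actual contribution is Theorem~\ref{main}, the asymptotic $K_{s,t}$-free case you allude to at the end. Your write-up correctly arrives at this conclusion: the reduction from $\chi(G)\ge t$ to $\delta(G')\ge t-1$ via iterated deletion of low-degree vertices is valid, and you rightly identify that the obstruction is not the per-pair Menger system but the global edge-disjoint packing across all $\binom{t}{2}$ pairs. So in substance your assessment matches the paper's stance.

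Two small corrections. First, the linear bound $f(t)\le Ct$ is due to DeVos, Dvo\v{r}\'{a}k, Fox, McDonald, Mohar and Scheide (with $C=200$), later improved by Dvo\v{r}\'{a}k--Yepremyan and Gauthier--Le--Wollan; DeVos, Kawarabayashi, Mohar and Okamura handled only the small cases $5\le t\le 7$. Second, your phrasing ``strong enough to recover the conjecture up to a $(1-\ep)$ factor in the sparse regime'' slightly overstates what Theorem~\ref{main} gives: it yields a clique immersion of order $(1-\ep)d(G)$, which implies the conjecture asymptotically for $K_{s,t}$-free graphs because $\chi(G)\le d(G)+1$ in any subgraph-critical piece, but it does not touch the dense (non-$K_{s,t}$-free) regime at all.
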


Conjecture \ref{conj-hadwiger-immersion} has
seen more success than the Hadwiger's Conjecture: the cases $t\le 4$ are trivial, while the cases $5\le t\le7$ are proved by DeVos, Kawarabayashi, Mohar and Okamura \cite{devos}. Here for any $t\in \mathbb{N}$, we use $f(t)$ to denote the least integer such that every graph $G$ with $\delta(G)\ge f(t)$ contains $K_t$ as an immersion. In \cite{devos}, they showed that $f(t)=t-1$ holds for any $t\in\{5,6,7\}$,  It is easy to see that $f(t)\ge t-1$.  For $t\ge 8$, there are infinitely many constructions \cite{collin,devos1} showing that $f(t)\ge t$. A linear upper bound on $f(t)$ is due to DeVos, Dvo\v{r}\'{a}k, Fox, McDonald, Mohar and Scheide \cite{devos1}, showing that $f(t)\le 200t$, and it was then improved to $11t+7$ by Dvo\v{r}\'{a}k and Yepremyan \cite{dvo}, who asked whether for all $t\ge8$, $f(t)=t$. Building on the work of \cite{dvo}, Gauthier, Le and
Wollan \cite{gau} showed that $f(t)\le7t+7$.

Our work is motivated by a result of K\"{u}hn and Osthus \cite{kuhn}. They proved that for any fixed bipartite graph $H$, Hadwiger's conjecture holds strongly for any $H$-free graph $G$ by constructing a clique minor of order polynomially larger than $d(G)$. Note that here and throughout the paper, by $H$-free we mean that there is no subgraph in $G$ that is isomorphic to $H$. Improved bounds on the order of clique minor in $H$-free graphs for more general bipartite graphs $H$ were later obtained by
Krivelevich and Sudakov \cite{kri} and Norin, Postle and Song \cite{nps}.

Our main result reads as follows. It in particular implies that Conjecture \ref{conj-hadwiger-immersion} is asymptotically true if we forbid any fixed complete bipartite graph.

\begin{theorem}\label{main}
Given any positive constant $\ep$ and integers $s,t\ge2$, there exists $d_0=d_0(\ep,s,t)$ such that every $K_{s,t}$-free graph $G$ with $d(G)\ge d_0$ contains a clique immersion of order $(1-\ep)d(G)$.
\end{theorem}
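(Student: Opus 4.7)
My plan is to proceed in three stages: (i) pass to a subgraph $G' \subseteq G$ with $d(G') \ge d := d(G)$ and $\delta(G') \ge (1-\ep_1)d$ for some $\ep_1 \ll \ep$; (ii) pick a branch set $B \subseteq V(G')$ of size $k := \lceil (1-\ep) d \rceil$; and (iii) route $\binom{k}{2}$ pairwise edge-disjoint paths in $G'$, one between each pair in $B$. The rigid degree constraint driving everything is that each branch vertex must be the endpoint of $k - 1$ pairwise edge-disjoint paths and so must have degree at least $k - 1 \ge (1-\ep)d - 1$; thus stages (i) and (ii) together must produce a subgraph in which a set of $k$ vertices all have degree close to the average.

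Stage (i) is where the $K_{s,t}$-free assumption is essential. A standard iterative degree-cleanup argument only yields a subgraph with $\delta \ge d/2$, which is too weak. To push minimum degree from $d/2$ up to $(1-\ep_1)d$, I would use the K\H{o}v\'ari--S\'os--Tur\'an bound: any subgraph of $G$ with $\delta \ge d/2$ has $|V| = \Omega(d^{s/(s-1)}) \gg d$, so its degree sequence cannot be concentrated on few vertices. This sparsity, combined with a K\"{u}hn--Osthus-style pseudorandom-subgraph extraction (or, alternatively, dependent random choice), should allow us to extract a $G'$ with $\delta(G') \ge (1-\ep_1)d$ while retaining $|V(G')| \gg d$.

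With such a $G'$ in hand, stage (ii) is relatively routine: any $k$-vertex subset meets the degree lower bound, and a random choice of $B$ additionally ensures $G'[B]$ is sparse---since $G'[B]$ is itself $K_{s,t}$-free, it has at most $O(k^{2-1/s})$ edges---so most edges at each branch vertex head into the routing set $R := V(G') \setminus B$. Stage (iii) is the crux: we must route $\binom{k}{2} \approx d^2/2$ pairwise edge-disjoint paths through $G'$. Short paths (length $2$ or $3$) through $R$ are the natural candidates. Length-$2$ paths alone will not suffice, since for $s=2$ each pair has at most $t-1$ common neighbors; length-$3$ (or longer) paths must be used for the remaining pairs. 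The main obstacle is handling the global edge-disjointness constraint across all pairs simultaneously. I expect a two-step approach to work: first, a random initial routing (with path length chosen so that expected edge conflicts are small), then local rerouting to resolve residual conflicts using the remaining slack. Throughout, $K_{s,t}$-freeness is used via KST-type counting to bound how many short paths can share a fixed edge, and maintaining these bounds throughout the routing process is, I expect, the main technical difficulty of the proof.
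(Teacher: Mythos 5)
There is a genuine gap, concentrated in your stage (iii) (and, secondarily, in stage (i)). Routing by paths of length $2$ or $3$ cannot work: a $K_{s,t}$-free graph of average degree $d$ may have $n\gg d^{3}$ vertices and girth larger than $8$ (e.g.\ $d$-regular graphs of high girth, which are in particular $K_{2,2}$-free), so any fixed vertex has at most about $d^{3}\ll n$ vertices within distance $3$, and almost every pair of branch vertices is joined by \emph{no} path of length at most $3$. Random routing plus local rerouting cannot repair this, because the candidate paths do not exist. Even allowing longer paths, your outline contains no mechanism guaranteeing that $\binom{k}{2}$ edge-disjoint connections exist at all: a graph of high minimum degree can still have severe bottlenecks (two dense pieces joined by few edges), and $K_{s,t}$-freeness does not exclude this. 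This is precisely why the paper first passes to a robust sublinear expander (Lemma~\ref{lem-expander}) and then connects pairs via the robust small-diameter property (Lemma~\ref{lem-diameter}) by paths of length up to $m=\Theta\bigl(\log^{3}(n/d)\bigr)$, avoiding the $O(d^{2}m)$ edges already used; arranging that each branch vertex can still launch $(1-o(1))d$ edge-disjoint paths after most of its incident edges have been consumed is the content of the ``units'' in the dense case and the ``kernels in subexpanders'' in the sparse case. That machinery is the heart of the proof, not a technicality, and it is absent from your plan.

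Stage (i) is also unjustified: the generic cleanup bound $\delta\ge d/2$ is tight, and it is neither clear nor needed that a $K_{s,t}$-free graph contains a subgraph of minimum degree $(1-\ep_1)d$. What is true, and what the paper actually uses, is weaker but sufficient: by K\H{o}v\'ari--S\'os--Tur\'an (Proposition~\ref{densi}), deleting any set of $O(d\cdot\mathrm{polylog})$ vertices barely decreases the average degree, so one can greedily collect $(1-o(1))d$ distinct vertices each of degree $(1-o(1))d$ to serve as centers or branch vertices. Your instinct about where $K_{s,t}$-freeness enters (forcing $n\gg d$) is right, but it is exploited to protect the average degree under deletions and to keep the host graph sparse, not to boost the minimum degree of a single global subgraph.
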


The bound above is asymptotically optimal as $G$ could be $d$-regular. It would be interesting to improve on the additive error term.

\medskip

Our approach differs from previous works on immersions in~\cite{devos1, dvo, gau} which generally reduce the problem on embedding clique immersions to a dense eulerian graph via suppressing vertices and use some list coloring arguments. Our proof adopts a more direct embedding approach, making use of certain expander and builds on the techniques developed in the work of Liu and Montgomery \cite{lm} on embedding clique subdivisions. In \cite{lm}, to embed large clique subdivisions in dense expander, a key idea is to build many unit structures by finding vertices with large boundaries. Then the arguments often reduce to greedily connecting the units with vertex-disjoint paths. We are attempting this approach to immersions which boils down to embedding edge-disjoint paths. Unlike in \cite{lm} where the problem is harder for dense expanders, the bulk of the work in our paper is to handle sparse expanders. In sparse expanders, one can still adapt the approach in \cite{lm} to obtain a clique immersion of order linear in $d$. As the expansion is sublinear, if a small constant portion of neighbors of a vertex is used, then the vertex can still expand well. To embed $K_{(1-o(1))d}$-immersion in sparse expanders, we essentially need vertices that can expand past a relatively large set of vertices even after deleting ninety-nine percent of its incident edges. However the sublinear expansion is not strong enough to guarantee the expansion of any such vertex. To overcome this issue, we instead use an idea from the work of Haslegrave, Kim and Liu \cite{hkl} to find many vertex-disjoint subexpanders and grow a vertex in each subexpander robustly until it reaches large enough size to enjoy further expansion in the main expander (see Section~\ref{subs22} for a more elaborate sketch of the proof).

\medskip

The rest of the paper will be organized as follows. In Section 2, we introduce some necessary notions and tools whilst outline the proof of our main result. We divide the main proof into two cases depending on the density of our expander.
Sections~\ref{sec-dense} and \ref{sec-sparse} are devoted to embedding large clique immersions in dense and sparse expanders, respectively.

\section{Preliminaries and notation}
For a set of vertices $X\subseteq V(G)$, denote its \emph{external neighbourhood} by $N_G(X):=\{u\in V(G)\setminus X: uv\in E(G) \mbox{ for some } v\in X\}$. Furthermore, denote by $\partial_G(X)$ the \emph{edge boundary} of $X$, i.e.~$E_G[X,V(G)\setminus X]$. Define $G-X$ to be the induced subgraph of $G$ on $V(G)\setminus X$ and for a subgraph $F$, use $G\setminus F$ to denote the spanning subgraph with $E(F)$ removed. Throughout the paper, the \emph{length} of a path always denotes the number of edges in the path. For two sets $A, B\subseteq V(G)$, an $(A,B)$-path is path $P$ with two endpoints separately lying in $A$ and $B$ such that $P$ does not have any interior vertices in $A$ or $B$. Moreover, the \emph{distance} between $A$ and $B$ is the minimum length of an $(A,B)$-path and if $A\cap B\neq \emptyset$, then the distance is zero.
For each $r\in\mathbb{N}$, the $r$-\emph{th sphere} around $X$, denoted by $N^{r}_G(X)$, is the set of vertices with distance exactly $r$ from $X$. So $N_G^0(X)=X$ and $N_G^1(X)=N_G(X)$. Denote by $B_G^r(X)$ the \emph{ball} of radius $r$ around $X$, i.e.~$B_G^r(X)=\cup_{0\le i\le r}N_G^i(X)$. Throughout the proof, all logarithms are in the natural basis.

Let $G$ be a $K_{s,t}$-free graph on $n$ vertices for some $2\le s \le t$ and $d(G)=d$. Then a classical result of K\H{o}v\'ari, S\'os and Tur\'an \cite{kov} on the Tur\'{a}n number of complete bipartite graphs tells that $e(G)=O(n^{2-1/s})$ and thus we have
\begin{equation}\label{lower}
n/d=\Omega(d^{\frac{1}{s-1}}).
\end{equation}
\medskip
We will also make use of the following bipartite version.

\begin{lemma}\emph{\cite{kov}}\label{bipartite}
For integers $s,t$ with $2\le s\le t$, there exists a constant $c$ such that the following holds. Let $G=(V_1,V_2,E)$ be a $K_{s,t}$-free bipartite graph with $|V_1|=n_1,|V_2|=n_2$. Then
  $$e(G)\le cn_1^{1-1/s}n_2.$$
\end{lemma}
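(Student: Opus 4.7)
The plan is to prove this via the classical Kővári–Sós–Turán double-counting argument, adapted to the bipartite setting. Since $s\le t$ and $K_{s,t}\cong K_{t,s}$, the hypothesis is symmetric in the two sides, and we are free to count \emph{rooted} configurations from whichever side is convenient. To land the exponent $1-1/s$ on $n_1$ and the linear factor $n_2$, the right choice is to count from $V_1$ while picking $s$-subsets inside $V_2$.

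Explicitly, I would consider the set
\[
\mathcal{P} \;=\; \bigl\{\, (u,T)\ :\ u\in V_1,\ T\in\tbinom{V_2}{s},\ T\subseteq N_G(u)\,\bigr\},
\]
and double count $|\mathcal{P}|$. Fixing $u$ first gives $|\mathcal{P}|=\sum_{u\in V_1}\binom{d(u)}{s}$. Fixing $T$ first and using that $G$ contains no $K_{t,s}\cong K_{s,t}$: any $s$-set $T\subseteq V_2$ can have at most $t-1$ common neighbours in $V_1$, otherwise the $t$ common neighbours together with $T$ would span a $K_{t,s}$. Therefore
\[
\sum_{u\in V_1}\binom{d(u)}{s} \;\le\; (t-1)\binom{n_2}{s} \;\le\; \frac{t-1}{s!}\,n_2^{s}.
\]

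For the matching lower bound I would use convexity of $\binom{x}{s}$ (for $x\ge s-1$) together with Jensen's inequality, yielding $\sum_{u\in V_1}\binom{d(u)}{s}\ge n_1\binom{\bar d}{s}$ where $\bar d:=e(G)/n_1$ is the average degree from the $V_1$ side. In the regime $\bar d\ge 2s$ one has $\binom{\bar d}{s}\ge (\bar d/2)^{s}/s!$, so combining gives $n_1(\bar d/2)^{s}\le (t-1)\,n_2^{s}$, whence
\[
e(G) \;=\; n_1\bar d \;\le\; 2(t-1)^{1/s}\, n_1^{\,1-1/s}\, n_2.
\]
The complementary low-density case $\bar d<2s$ is trivial, as it gives $e(G)<2s\,n_1\le 2s\,n_1^{1-1/s}n_2$ (using $n_2\ge 1$). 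Absorbing both estimates into a single constant $c=c(s,t)$ yields the lemma.

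I do not anticipate any real obstacle: the argument is textbook, and the only thing to be careful about is the orientation of the forbidden subgraph (counting $s$-subsets on the $V_2$ side rather than the $V_1$ side) so that the exponents come out matching the statement.
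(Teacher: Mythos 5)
The paper offers no proof of this lemma (it is quoted from the literature), so the only benchmark is the standard K\H{o}v\'ari--S\'os--Tur\'an double count, which is exactly what you reproduce. Your core argument is correct: the orientation is the right one (degrees from $V_1$, $s$-subsets of $V_2$), the bound $\sum_{u\in V_1}\binom{d(u)}{s}\le (t-1)\binom{n_2}{s}$ is legitimate because $K_{t,s}\cong K_{s,t}$ and the paper forbids $K_{s,t}$ as an abstract subgraph, and the convexity step in the regime $\bar d\ge 2s$ correctly yields $e(G)\le 2(t-1)^{1/s}n_1^{1-1/s}n_2$.

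The gap is in your ``trivial'' case $\bar d<2s$: the inequality $2s\,n_1\le 2s\,n_1^{1-1/s}n_2$ is equivalent to $n_1^{1/s}\le n_2$, which certainly does not follow from $n_2\ge 1$. This is not a cosmetic slip, because the lemma as literally stated is false without some relation between $n_1$ and $n_2$: take $G=K_{n_1,1}$, i.e.\ $V_2$ a single vertex joined to all of $V_1$. This star is $K_{s,t}$-free for all $t\ge s\ge 2$, has $e(G)=n_1$, yet the claimed bound is $cn_1^{1-1/s}$, which fails once $n_1>c^{s}$. What your argument actually establishes is the standard bipartite bound
\[
e(G)\;\le\; 2(t-1)^{1/s}\,n_1^{1-1/s}n_2\;+\;2s\,n_1,
\]
the second term being precisely the contribution of the low-density case. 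That weaker (and correct) form is all the paper ever needs: in both places the lemma is invoked, $V_1$ is a set of size much smaller than $|V_2|$, so $n_1^{1/s}\le n_2$ and the additive term is absorbed into the main one. So either state the bound with the extra additive term, or add the hypothesis $n_1\le n_2$ (under which $n_1^{1/s}\le n_2$ and your case analysis closes as written); as it stands, the last line of your proof asserts a false inequality.
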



\subsection{Robust sublinear expander}
For $\ep_1>0$ and $k>0$, let $\rho(x)$ be
the function
\begin{eqnarray}\label{epsilon}
\rho(x)=\rho(x,\ep_1,k):=\left\{\begin{tabular}{ l c r }
$0$ & \mbox{ ~if } $x<k/5$ \\
$\ep_1/\log^2(15x/k)$ & $\mbox{ ~~if  }  ~x\ge k/5$, \\
\end{tabular}
\right.
\end{eqnarray}
\noindent where, when it is clear from context we will not write the dependency on $\ep_1$ and $k$ of $\rho(x)$. Note that $\rho(x)\cdot x$ is increasing for $x\ge k/2$.
In \cite{kom}, Koml\'{o}s and Szemer\'{e}di introduced a notion of $(\ep_1,k)$-\emph{expander} $G$ in which for any subset $X\subseteq V(G)$ with $k/2\le |X|\le |V(G)|/2$, we have $|N_G(X)|\ge\rho(|X|)\cdot |X|$. In this paper, we shall utilize the following robust version  recently developed by Haslegrave, Kim and Liu \cite{hkl}.
\begin{defn}\cite{hkl}
A graph $G$ is an \emph{$(\ep_1,k)$-robust-expander} if for all subsets $X\subseteq V(G)$ of size $k/2\le |X|\le |V(G)|/2$ and any subgraph $F\subseteq G$ with $e(F)\le d(G)\cdot\rho(|X|)|X|$, we have that
\begin{eqnarray}\label{expansion}
|N_{G\setminus F}(X)|\ge \rho(|X|)\cdot |X|.
\end{eqnarray}
\end{defn}

We will use the following version of expander lemma in \cite{hkl}, which states that every graph contains a robust expander with almost the same average degree.

\begin{lemma}\emph{\cite{hkl}}\label{lem-expander}
Let $C>30$, $0<\ep_1\le \frac{1}{10C}, 0<\ep_2<1/2, d>0, \eta=C\ep_1/\log3$ and $\rho(x)=\rho(x,\ep_1,\ep_2 d)$ be as in~\eqref{epsilon}. Then every graph $G$ with $d(G)=d$ has a subgraph $G'$ that is an $(\ep_1,\ep_2 d)$-robust-expander with $d(G')\geq (1-\eta)d$ and $\delta(G')\geq d(G')/2$.
\end{lemma}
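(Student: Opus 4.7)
The plan is to follow the Komlós--Szemerédi expander-extraction template, augmented to handle the adversarial edge-deletion set $F$. As a preprocessing step I would reduce to $\delta(G)\ge d(G)/2$: iteratively delete any vertex whose degree lies strictly below $d(G)/2$. A direct calculation shows that each such deletion weakly \emph{increases} the average degree, so the process terminates with a non-empty subgraph of the desired minimum degree, which I rename $G$.

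To extract the robust expander, work with the family $\mathcal{F} = \{H \subseteq G : d(H) \ge (1-\eta)d\}$ and let $G^\star \in \mathcal{F}$ be extremal under a weighted potential $\Phi(H) = 2e(H) - (1-\eta)d\cdot|V(H)| + \lambda(|V(H)|)$, where the correction $\lambda$ is a slowly-growing function tied to the $1/\log^2$ shape of $\rho$. To verify that $G^\star$ is robustly expanding, suppose for contradiction that some $X$ of size in $[\ep_2 d/2, |V(G^\star)|/2]$ and an edge set $F\subseteq G^\star$ with $e(F)\le d(G^\star)\rho(|X|)|X|$ produce $Y := N_{G^\star\setminus F}(X)$ of size $|Y| < \rho(|X|)|X|$. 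The key structural observation is that every $G^\star$-edge from $X$ to $V(G^\star)\setminus(X\cup Y)$ must lie in $F$, so there are at most $d(G^\star)\rho(|X|)|X|$ such edges. Decompose $G^\star$ into $H_1 := G^\star[X \cup Y]$ and $H_2 := G^\star - X$, overlapping on $Y$, and use the accounting
\begin{equation*}
e(H_1) + e(H_2) = e(G^\star) + e(G^\star[Y]) - e_{G^\star}\bigl(X,\, V(G^\star)\setminus (X\cup Y)\bigr),
\end{equation*}
together with $|V(H_1)| + |V(H_2)| = |V(G^\star)| + |Y|$ and $|Y| < \rho(|X|)|X|$. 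A careful arithmetic balancing these against the $F$-budget and the $\lambda$-correction shows $\Phi(H_1) + \Phi(H_2) > \Phi(G^\star)$, hence at least one of $H_1, H_2$ beats $G^\star$ in $\Phi$, contradicting extremality.

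The last step is to reapply the minimum-degree reduction on $G^\star$: the few edges discarded per deletion can be absorbed into any hypothetical $F$-witness and so preserve robust expansion, yielding the final $G'$ with $\delta(G') \ge d(G')/2$ and $d(G') \ge (1-\eta)d$. The hardest part of the whole argument is designing the correction $\lambda$ and calibrating the parameters so that the arithmetic in the extraction phase closes. A naive $\lambda = 0$ fails because the allowed edge loss $d\rho(|X|)|X|$ can wipe out the density slack of $G^\star$ over the threshold $(1-\eta)d$; the $1/\log^2$ decay of $\rho$ is chosen so that the accumulated costs over all possible scales of $|X|$ form a telescope bounded by $O(\eta d)$, exactly matching the choice $\eta = C\ep_1/\log 3$ with $C > 30$. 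Getting these constants mutually compatible, and verifying that $H_1$ and $H_2$ cannot both fail the potential test simultaneously, is the technical heart of the Haslegrave--Kim--Liu construction.
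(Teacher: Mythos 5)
First, note that the paper does not prove Lemma~\ref{lem-expander} at all: it is quoted verbatim from Haslegrave--Kim--Liu \cite{hkl}, so there is no in-paper argument to compare against. Your sketch does follow the right template --- the Koml\'os--Szemer\'edi extremal-subgraph extraction, with the correct key observation for robustness, namely that every $G^\star$-edge from $X$ to $V(G^\star)\setminus(X\cup Y)$ lies in $F$ and hence costs at most $d(G^\star)\rho(|X|)|X|$ in the edge count of the decomposition $H_1=G^\star[X\cup Y]$, $H_2=G^\star-X$. The edge-accounting identity you state is correct.

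However, two of the steps you wave at are exactly where the proof lives, and as written they do not close. (i) With an additive potential, the inequality $\Phi(H_1)+\Phi(H_2)>\Phi(G^\star)$ does \emph{not} contradict extremality: by maximality you would only get $2\Phi(G^\star)>\Phi(G^\star)$, which is vacuous. You need either $\Phi(H_1)+\Phi(H_2)>2\Phi(G^\star)$ or, as in \cite{kom,hkl}, a ratio-type functional (average degree divided by a slowly increasing correction $\lambda(|V(H)|)$ whose total variation over all scales is $O(\ep_1)$), for which a mediant/weighted-average argument shows $\max\{f(H_1),f(H_2)\}>f(G^\star)$. In either formulation you must also verify that the winning piece still belongs to the family over which you optimise (e.g.\ $H_1$ is nonempty of size at least $\ep_2 d/2$, and its density clears the threshold), which your sketch does not address. (ii) The final ``reapply the minimum-degree reduction and absorb the discarded edges into $F$'' is a genuine gap: deleting a vertex set $D$ from $G^\star$ can shrink $N(X)$ by $|N(X)\cap D|$, and this loss is not simulated by any admissible edge set $F$ of size $d\rho(|X|)|X|$, since the number of edges incident to $D$ is of order $d\cdot|D|$ with $|D|$ uncontrolled. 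The standard fix is that no second pass is needed: the extremal choice of $G^\star$ already forces $\delta(G^\star)\ge d(G^\star)/2$, because removing a vertex of degree below half the average degree would strictly increase the functional, contradicting extremality. So the architecture is right, but the quantitative heart (the choice and telescoping of $\lambda$ against the $F$-budget) and the min-degree conclusion are asserted rather than proved.
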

\noindent

The following small diameter property is the key property of the expanders that we will repeatedly make use of. It roughly says that we can find a relatively short path between any two large sets, avoiding a small set of vertices or edges.
\begin{lemma}[{Robust small diameter, Lemma 2.3 in \cite{hkl}}]\label{lem-diameter}
  Let $0<\ep_1, \ep_2<1$ and $G$ be an $n$-vertex $(\ep_1,\ep_2d)$-robust-expander. Given two sets $X_1,X_2\subseteq V(G)$ of size $x\ge \ep_2d/2$, let $Y$ be a vertex set of size at most $\rho(x)x/4$ and $F$ be a subgraph with at most $d(G)\rho(x)x$ edges. Then there is an $(X_1,X_2)$-path of length at most $\frac{2}{\ep_1}\log^3(15n/\ep_2d)$ in $(G\setminus F)-Y$.
\end{lemma}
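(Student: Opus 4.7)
The plan is to run breadth-first search (BFS) from both $X_1$ and $X_2$ inside $G' := (G\setminus F)-Y$ and show, via the robust-expander property, that each BFS ball grows geometrically until it surpasses $n/2$. Writing $B_i^r := B_{G'}^r(X_i)$ for $i\in\{1,2\}$, let $r^\ast$ and $s^\ast$ be the smallest indices with $|B_1^{r^\ast}|>n/2$ and $|B_2^{s^\ast}|>n/2$. Then $|B_1^{r^\ast}|+|B_2^{s^\ast}|>n$, so the two balls must share some vertex $v\in V(G)\setminus Y$, which yields an $(X_1,X_2)$-path in $G'$ of length at most $r^\ast+s^\ast$. It therefore suffices to show $r^\ast,s^\ast\le \frac{1}{\ep_1}\log^3(15n/\ep_2 d)$.

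For the growth step, suppose $|B_1^r|\le n/2$ and write $B := B_1^r$. Since $|B|\ge x\ge\ep_2 d/2=k/2$ and $\rho(y)\cdot y$ is non-decreasing for $y\ge k/2$, we have $e(F)\le d\,\rho(x)x\le d\,\rho(|B|)|B|$, so the robust-expander definition applies and yields $|N_{G\setminus F}(B)|\ge\rho(|B|)|B|$. Likewise $|Y|\le\rho(x)x/4\le\rho(|B|)|B|/4$, so deleting $Y$ still leaves a new-vertex neighbourhood of size at least $(3/4)\rho(|B|)|B|$, whence
\[
|B_1^{r+1}|\ge\bigl(1+\tfrac{3}{4}\rho(|B|)\bigr)|B|,
\]
and the analogous estimate holds for $B_2^{r+1}$.

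A routine dyadic calculation then controls $r^\ast$. Partition the growth from $|B_1^0|=x$ up to $n/2$ into $\lceil\log_2(n/2x)\rceil$ doubling phases, where in the $j$-th phase $|B_1^r|\in[2^jx,2^{j+1}x]$ and hence $\rho(|B_1^r|)\ge\ep_1/\log^2(30\cdot 2^j n/k)$. Since $(1+\alpha/2)^{s}\ge 2$ for $s\ge 2/\alpha$, each such phase completes within $O\bigl(\ep_1^{-1}\log^2(15n/\ep_2 d)\bigr)$ iterations, and summing over the $O\bigl(\log(n/\ep_2 d)\bigr)$ phases gives $r^\ast\le\ep_1^{-1}\log^3(15n/\ep_2 d)$, and similarly for $s^\ast$. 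The whole argument is essentially mechanical; the only point requiring real care is to ensure that the \emph{fixed} subgraph $F$ and vertex set $Y$ remain small enough to invoke the robust-expander condition at every BFS iteration as $|B|$ grows. This is precisely what the monotonicity of $\rho(y)y$ on $y\ge k/2$ provides: both the allowed edge-removal budget $d\rho(|B|)|B|$ and the allowed vertex-removal budget $\rho(|B|)|B|/4$ only increase as $|B|$ grows past its initial value $x$.
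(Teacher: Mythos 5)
The paper gives no proof of this lemma (it is imported verbatim as Lemma 2.3 of \cite{hkl}), so the comparison is with the standard Koml\'os--Szemer\'edi-style argument, which is exactly the two-sided ball-growing scheme you describe. Your structural skeleton is right: growing $B^r_{(G\setminus F)-Y}(X_i)$ until both balls exceed $n/2$, forcing an intersection; and the key observation that the monotonicity of $y\mapsto y\rho(y)$ on $[k/2,\infty)$ keeps the fixed budgets $e(F)\le d\rho(x)x$ and $|Y|\le\rho(x)x/4$ below the allowances $d\rho(|B|)|B|$ and $\rho(|B|)|B|/4$ at every iteration is precisely the point of the \emph{robust} expansion hypothesis.

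The gap is in the final count, and it matters because the lemma asserts the specific bound $\frac{2}{\ep_1}\log^3(15n/\ep_2 d)$ (this exact value of $m$ is then used quantitatively throughout the paper). You set up the dyadic phases with the phase-dependent estimate $\rho(|B|)\ge \ep_1/\log^2(15\cdot 2^{j+1}x/k)$, but you then discard it and bound every phase by the worst case $O(\ep_1^{-1}\log^2(15n/\ep_2 d))$ and multiply by the number of phases $O(\log(n/\ep_2 d))$. Made explicit, this gives roughly $\frac{4\log 2}{3\ep_1}\log^2(15n/k)\cdot\frac{\log(n/k)}{\log 2}\approx \frac{1.33}{\ep_1}\log^3(15n/k)$ per ball, which exceeds the $\frac{1}{\ep_1}\log^3(15n/\ep_2 d)$ per side that the lemma requires. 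To land the constant you must actually sum the phase-dependent bounds: the number of steps in phase $j$ is at most $\frac{4\log 2}{3}\cdot\frac{1}{\ep_1}\log^2(15\cdot 2^{j+1}x/k)$, and
\[
\sum_{j\ge 0}\log^2\bigl(15\cdot 2^{j+1}x/k\bigr)\;\le\;\frac{1}{3\log 2}\log^3(15n/k)\cdot(1+o(1)),
\]
a telescoping (sum-of-squares) estimate, whose product with $\frac{4\log 2}{3}$ is $\frac{4}{9}<1$. Two smaller points: your interim bound $\rho(|B_1^r|)\ge \ep_1/\log^2(30\cdot 2^jn/k)$ replaces $x$ by $n$ inside the logarithm, which destroys exactly the phase-dependence you need for the telescoping; and at step $r=0$ you assert $|B|\ge x$, which presupposes $X_i\cap Y=\varnothing$ --- if $X_i$ meets $Y$ one should start the BFS from $X_i\setminus Y$ (still of size at least $(1-\rho(x)/4)x$) and note the expansion is applied to a set of at least this size, or apply the expansion to $X_i$ itself before deleting $Y$.
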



The following is our main lemma, which finds in a robust expander a clique immersion of asymptotically optimal size.
\begin{lemma}\label{lem-immersion-not-dense}
Let $0<\ep_1\le1/400, 0<\ep_2<1/2, \eta\ge\max\{\frac{40\ep_1}{\log 3},5\ep_2\}$ and $2\le s\le t$. Then there exists $d_0$ satisfying the following. Let $G$ be a $K_{s,t}$-free $(\ep_1,\ep_2d)$-robust-expander of order $n$ and
     $d(G)=d\ge d_0.$ 
     Then $G$ contains a clique immersion of order at least $(1-9\eta)d$.
\end{lemma}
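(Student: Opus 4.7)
The plan is to split the proof into two cases by the order of $G$, delegating each to a sub-lemma proved in Section~\ref{sec-dense} (\emph{dense} case) and Section~\ref{sec-sparse} (\emph{sparse} case) respectively. The splitting threshold is of the form $n \le d^{1+c}$ for a constant $c = c(s,\eta)$: small enough that the ``budget'' $d(G)\rho(|X|)|X|$ in Lemma~\ref{lem-diameter} dominates the edge usage in the dense argument, yet large enough that the inequality~\eqref{lower} supplies the room needed in the sparse argument for the subexpander construction. Both cases ultimately require $m := (1-9\eta)d$ branch vertices of degree at least $m-1$ (since each branch vertex is the endpoint of $m-1$ edge-disjoint paths); such a set is obtained by an initial degree-peeling step.

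For the dense case, the small-diameter property of Lemma~\ref{lem-diameter} is already strong enough to support direct greedy routing. I would enumerate the $\binom{m}{2}$ pairs $(i,j)$, pre-allocate one incident edge at each of $v_i, v_j$ to the path $P_{ij}$, and then invoke Lemma~\ref{lem-diameter} to route a short path in $G \setminus F$ between the allocated neighbours, where $F$ records all edges committed so far. Each such path has length $O(\log^3 n)$, so the cumulative edge usage stays well within Lemma~\ref{lem-diameter}'s budget throughout.

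The sparse case is the main obstacle. Direct greedy routing fails here because the paths are much longer, and the $\Theta(d)$ incident-edge quota at each branch vertex is quickly exhausted. Instead I would follow the subexpander template of~\cite{hkl}: (i) locate $m$ pairwise vertex-disjoint subexpanders $H_1, \ldots, H_m$ with $v_i \in H_i$ of near-full degree inside; (ii) inside each $H_i$, grow $v_i$ by a robust layered BFS into a \emph{unit} consisting of $m-1$ internally edge-disjoint paths from $v_i$ to a leaf set $L_i$; (iii) in the outer expander $G$, use Lemma~\ref{lem-diameter} repeatedly to link each pair $(L_i, L_j)$ by a short edge-disjoint path, avoiding previously committed edges; (iv) concatenate to produce the $v_i$--$v_j$ paths.

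The crucial difficulty is step (ii): growing $v_i$ into a unit with $\approx d$ edge-disjoint outgoing paths while keeping the edge-usage at every encountered vertex tiny (so that step (iii) remains feasible). Standard sublinear expansion is too weak once a constant fraction of a vertex's incident edges is forbidden; it is precisely the \emph{robustness} of the expansion that rescues the argument, allowing the BFS to continue by using~\eqref{expansion} with $F$ comprising all committed edges so far. The $K_{s,t}$-free hypothesis enters here twice more: via~\eqref{lower} to guarantee enough disjoint subexpanders, and via Lemma~\ref{bipartite} to control the edge count between the growing BFS tree and its frontier, ensuring that $|F|$ remains within the robust-expansion budget throughout.
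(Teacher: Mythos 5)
Your overall architecture (a density dichotomy, with greedy routing via Lemma~\ref{lem-diameter} in the dense regime and subexpanders plus robustly grown balls in the sparse regime) matches the paper's in outline, but the dense case as you describe it has a genuine quantitative gap. Direct greedy routing between branch vertices cannot stay within the budget of Lemma~\ref{lem-diameter}: for any admissible sets $X_1,X_2$ of size $x$, the forbidden subgraph $F$ may have at most $d(G)\rho(x)x$ edges, and $\rho(x)\le\ep_1\le 1/400$, so if you route out of neighbourhoods of branch vertices (so $x=O(d)$) the budget is at most $\ep_1 d^2$. But the connections themselves commit at least $\binom{(1-9\eta)d}{2}$ edges --- and in fact $\Theta(d^2\log^3(n/d))$ edges, since the paths Lemma~\ref{lem-diameter} returns have length up to $\frac{2}{\ep_1}\log^3(15n/\ep_2 d)$ --- which exceeds $\ep_1 d^2$ for every relevant $\eta$. (The vertex budget fails too: $\rho(x)x/4\le d/1600$ is not enough to avoid the other $\approx d$ branch vertices.) This is exactly why the paper does not route directly but first builds $(\ell',m^5,2m)$-units (Lemma~\ref{lem-units}): each surviving unit exposes an exterior of $\ge dm^4$ leaves, so the connections are made between sets of size $x=dm^4$, inflating the edge budget to $d\rho(x)x\ge d^2m^3\gg 5d^2m$ and the vertex budget to $\ge dm^3/4\ge d$. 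Some such amplification device is unavoidable; your claim that ``the cumulative edge usage stays well within Lemma~\ref{lem-diameter}'s budget throughout'' is false as stated.

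A second, related problem is your threshold $n\le d^{1+c}$. The subexpander machinery of the sparse case needs $\approx d$ pairwise far-apart subexpanders each of order at least $d^2$ (and balls of size $\exp(\log^{1/5}n)$ around each), which requires $n$ to be superpolynomial in $d$; the paper accordingly places the cut at $d\ge\log^{200s}n$ and lets the unit construction cover the entire intermediate regime $d^{1+c}<n\le\exp(d^{1/200s})$, which your plan leaves uncovered since neither of your two arguments applies there. On the sparse side your sketch is essentially the paper's (subexpanders, kernels grown by robust BFS via consecutive shortest paths, linking outer balls), but note two ingredients you omit that the paper needs: the dichotomy on the set $Z_1$ of vertices of degree $\ge dm^3$ (if $|Z_1|\ge d$ one connects them directly, since then $x=dm^3/2$ does beat the budget; otherwise deleting $Z_1$ yields the bounded maximum degree that makes the far-apart packing of subexpanders and balls possible), and the fact that Lemma~\ref{bipartite} is used to show $d(G-Z_1)\ge d-\eta d$ rather than to control BFS frontiers.
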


Theorem \ref{main} immediately follows from Lemmas \ref{lem-expander} and~\ref{lem-immersion-not-dense}.\medskip

\begin{proof}[Proof of Theorem \ref{main}.]
Given any constant $\ep>0$, we choose $C=40$, $\ep_1= \ep\log3/500$, $\ep_2<\min\{\ep/50,1/2\}$ and $\eta=\ep/10$. Then $\eta \ge\eta':=40\ep_1/\log3$ and Lemma \ref{lem-expander} applied to $G$ with $\ep_1,\ep_2,C=40, \eta'$, gives a subgraph $G'$ that is an $(\ep_1,\ep_2d)$-robust-expander with $d(G')\geq (1-\eta')d(G)\ge(1-\eta)d(G)$. By applying Lemma \ref{lem-immersion-not-dense} to $G'$ with $\ep_1,\ep_2,\eta=\ep/10$, we obtain a clique immersion of order at least $(1-9\eta)d(G')\ge(1-\ep)d(G)$ in $G'$, which is also an immersion in $G$.
\end{proof}

\subsection{Outline of the proof}\label{subs22}

Here we sketch an overview of the proof of Lemma~\ref{lem-immersion-not-dense}. We divide the proof into two cases according to
the density $d$ of our expander. For the dense case $d\ge \log^{200s} n$ in Section~\ref{sec-dense}, we adapt the approach of Liu and Montgomery \cite{lm} on embedding clique subdivisions in dense expanders, in which we construct a specific unit structure (see Definition~\ref{unit}) for embedding a $K_{d-o(d)}$-immersion. The main work is then to $(1)$ find in a dense expander $d-o(d)$ mutually edge-disjoint units with distinct centers (see Lemma~\ref{lem-units}); $(2)$ and connect these units with pairwise edge-disjoint paths so as to obtain a desired clique immersion.

For the proof of the sparse case, we divide the proof into two cases in Section~\ref{subs43} depending on the number of vertices of sufficiently large degree. Before that, we first deal with a special case when the maximum degree is bounded, which is covered by Lemma~\ref{bounded max-degree}.  Then we denote $Z_1$ as the set of high degree vertices (to be defined later in Section~\ref{subs43}). The case $|Z_1|\ge d$ is easy to handle and the strategy here is to greedily connect those vertices in $Z_1$ (see Claim \ref{many-large} in Section~\ref{subs42}). Towards the case when $G$ has less than $d$ such vertices, we instead focus our attention to the subgraph $G':=G-Z_1$. Using the assumption of $K_{s,t}$-freeness, we obtain that $G'$ has almost the same average degree with $G$ (see Claim \ref{cl1}).

Note that the subgraph $G'$ might not have the expansion property for small sets of vertices as in $G$, whereas we essentially need vertices that locally expand well for embedding a $K_{d-o(d)}$-immersion. To handle this, we then show in Section \ref{manyexpander} that there exist in $G'$ many dense $(\ep_1,\ep_2d)$-robust-expanders $F_1,F_2,\cdots,F_d$ which are relatively small and pairwise far apart from each other (Claim~\ref{manyexp}). Here we iteratively make use of Lemma~\ref{lem-expander} and Lemma~\ref{bipartite} to guarantee the existence of a dense expander one by one.

Anchoring at these small dense subexpanders, we first grow a vertex $v_i$ in each subexpander $F_i$ robustly into a small ball $K_i$ which has size relatively larger than $|Z_1|$ even after deleting ninety-nine percent of incident edges of $v_i$ (See Claim~\ref{roubao} in Section~\ref{kernels}). 
Then by further expanding (subsets of) each $K_i$ into two large balls in $G'$, we follow the strategy in the proof of Lemma~\ref{bounded max-degree} to finish the embedding of a desired clique immersion.



\section{Embedding immersions in dense expanders}\label{sec-dense}
In this section, we prove Lemma~\ref{lem-immersion-not-dense} assuming in addition that $d\ge \log^{200s} n$. Throughout the rest of this paper, we write
\begin{eqnarray}\label{mnd}
\ell=(1-5\eta)d,\quad \ell'=(1-4\eta)d,\quad  m:=\frac{2}{\ep_1}\log^3\left(\frac{15n}{\ep_2d}\right).
\end{eqnarray}
Note that by~\eqref{lower}, when $d$ is sufficiently large, then $n/d$ and also $m$ are sufficiently large, and
\begin{eqnarray}\label{eq-n-large}
n/d\ge m^{200} \quad\mbox{ and }\quad d\ge m^{50s}.
\end{eqnarray}
Also, for sufficiently large $d$, since $\rho(x)$ is decreasing in the interval $[\ep_2d/2,n]$, we have that for every $\ep_2d/2\le x\le n$,
\begin{eqnarray}\label{eq-eps-not-small}
     \rho(x)\ge \rho(n)\ge\frac{1}{m}.
\end{eqnarray}
\begin{defn}\label{unit}
Given integers $h_1,h_2,h_3>0$, an \emph{$(h_1,h_2,h_3)$-unit}~$F$ is a graph consisting of a center $v$, $h_1$ vertex-disjoint stars $S(u_i)$ centered at $u_i$, each of size $h_2$, and edge-disjoint $(v,u_i)$-paths, $i=1,\ldots,h_1$, each of length at most $h_3$. Moreover, the set of interior vertices in all $(v,u_i)$-paths is disjoint from all leaves in $\bigcup_{i=1}^{h_1} S(u_i)$. By the \emph{exterior} of the unit, denoted by $\mathsf{Ext}(F)$, we mean the set of all leaves in $\bigcup_{i=1}^{h_1}S(u_i)$. We call each $(v,u_i)$-path a \emph{branch} of $F$ and each edge in the star $S(u_i)$ a \emph{pendant} edge.
\end{defn}
\begin{figure}[h]
	\centering
	\includegraphics[width=4.5cm]{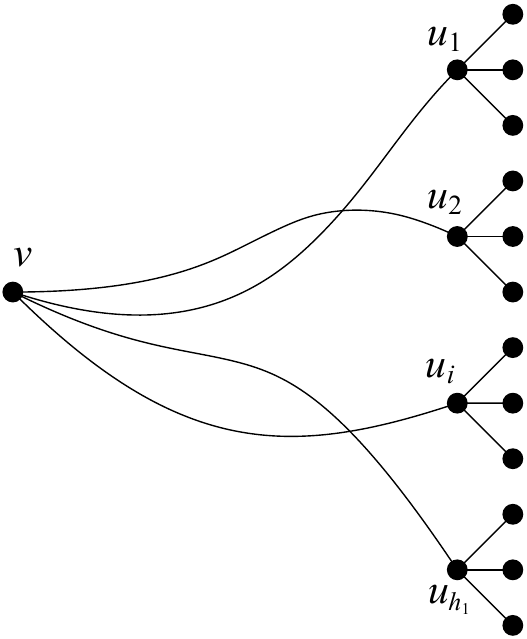}\\
	\caption{$(h_1,h_2,h_3)$-unit: $h_1$ vertex-disjoint stars $S(u_i)$ each of size $h_2$; $h_1$ edge-disjoint $(v,u_i)$-paths each of length at most $h_3$. Here $u_i$ may appear in a $(v,u_j)$-path for some $j\neq i$.}\label{}
\end{figure}

The following lemma guarantees a large collection of edge-disjoint units with distinct centers.

\begin{lemma}\label{lem-units}
	For each $0<\ep_1,\ep_2<1, \eta\ge\max\{\frac{40\ep_1}{\log 3},5\ep_2\}$ and $2\le s\le t$, there exists $C>0$ such that the following holds for all $n$ and $d$ with $d\ge \log^{200s}n$ and $n/d\geq C$. If $G$ is an $n$-vertex $K_{s,t}$-free $(\ep_1,\ep_2d)$-robust-expander with $d(G)=d$, then $G$ contains~$\ell'$ pairwise edge-disjoint $(\ell', m^5, 2m)$-units $F_1,\ldots, F_{\ell'}$ with distinct centers $v_1,\ldots,v_{\ell'}$, where $\ell'=(1-4\eta)d$ and  $m=\frac{2}{\ep_1}\log^3\left(\frac{15n}{\ep_2d}\right)$.
\end{lemma}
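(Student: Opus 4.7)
The plan is to build the $\ell'$ units greedily, one at a time, maintaining a running set $E_{\text{used}}$ of edges committed to earlier units, and constructing each new unit $F_i$ inside $G\setminus E_{\text{used}}$. A preliminary step is to extract a large pool $V^*$ of ``good'' vertices (of degree at least $\ell'$ in $G$) to serve as centers and star heads. Using the minimum degree bound $\delta(G)\ge d/2$ (assumable via Lemma~\ref{lem-expander}) and the K\H{o}v\'ari--S\'os--Tur\'an edge bound $e(G) = O(n^{2-1/s})$, together with the auxiliary estimate $n/d\ge m^{200}$ from~\eqref{eq-n-large}, one should get $|V^*|$ far exceeding the $\ell'$ centers needed, with enough slack that fresh vertices from $V^*$ remain available throughout the process.

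For each stage $i = 1, \ldots, \ell'$, I would build $F_i$ in three substeps. First, pick a fresh center $v_i\in V^*$ distinct from the previous centers, together with $\ell'$ distinct heads $u_i^1, \ldots, u_i^{\ell'}\in V^*$ and $\ell'$ distinct starting neighbours $w_j\in N_G(v_i)$ with $v_i w_j\notin E_{\text{used}}$; this is possible since $V^*$ is much larger than the total number of vertices incident to previously committed edges. Second, for each $j$ I construct the branch, an edge-disjoint $(v_i, u_i^j)$-path of length at most $2m$: grow BFS-balls $X_1\ni v_i$ and $X_2\ni u_i^j$ of size $x$, a suitably chosen polynomial in $m$ and $d$, then invoke Lemma~\ref{lem-diameter} with avoided edge-set $F$ the union of all committed edges and avoided vertex-set $Y$ the already-chosen heads and leaves in $F_i$, and concatenate its output path with short sub-paths inside the two balls leading back to $v_i$ and $u_i^j$. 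Third, at each head $u_i^j$ reserve $m^5$ pendant neighbours as the leaves of $S(u_i^j)$; vertex-disjointness of the $\ell'$ stars within $F_i$ and their disjointness from the path interiors of $F_i$ are ensured by a greedy selection, with Lemma~\ref{bipartite} used to bound the overlap among the $\ell'$ head neighbourhoods.

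The main obstacle is the edge- and vertex-budget bookkeeping. At any stage, the running $|E_{\text{used}}|$ is bounded by a fixed polynomial in $m$ times $d^2$, and the forbidden vertex count obeys a similar bound. Lemma~\ref{lem-diameter} applied with endpoint sets of size $x$ permits avoiding $d\rho(x)x$ edges and $\rho(x)x/4$ vertices; since $\rho(x)\ge 1/m$ by~\eqref{eq-eps-not-small}, both budgets grow essentially linearly with $x$. Taking $x$ as a suitable polynomial in $m$ and $d$ makes the available budgets exceed the used amounts while still leaving $x\ll n$, thanks to~\eqref{eq-n-large}. The delicate technical point is ordering the sub-steps so that every BFS-ball is grown after all previously committed edges have been subtracted from the host expander; this is possible precisely because the expansion is robust under deletion of any edge-set smaller than $d\rho(\cdot)(\cdot)$ by the definition of $(\ep_1, \ep_2 d)$-robust-expander.
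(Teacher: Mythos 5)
Your overall strategy (greedy construction plus repeated use of Lemma~\ref{lem-diameter} with edge/vertex budget bookkeeping) is in the right spirit, but two essential devices are missing, and without them the argument does not close. First, the attachment of branches to the center is not handled. The $\ell'$ branches of one unit are edge-disjoint paths all starting at $v_i$, so they must leave $v_i$ through $\ell'=(1-4\eta)d$ distinct edges; minimum degree $d/2$ (which, moreover, is not an assumption of Lemma~\ref{lem-units} and cannot simply be imported) is far too weak, and in a graph of average degree $d$ the set of vertices of degree at least $\ell'$ may have size only about $4\eta d<\ell'$, so your static pool $V^*$ need not contain enough centers. More importantly, Lemma~\ref{lem-diameter} only returns a path between the two anchor sets $X_1,X_2$; your ``short sub-paths inside the two balls leading back to $v_i$'' are uncontrolled and can reuse edges near $v_i$ from earlier branches, and after $\Theta(d)$ branches have consumed edges at $v_i$ the residual ball around the single vertex $v_i$ is too small for the expansion property (which only applies to sets of size at least $\ep_2 d/2$) to regrow it. The paper's fix is structural: it first builds, greedily via Proposition~\ref{densi}, a system of vertex-disjoint auxiliary stars --- a star $S(v_i)$ of size $d-3\eta d$ at each candidate center --- and requires every branch to terminate at a distinct, previously unoccupied leaf of $S(v_i)$, so that the extension to the center is the single pendant edge of that leaf. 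This is the device that guarantees edge-disjointness at the center, and it is absent from your proposal.

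Second, your budget analysis does not survive quantification. A new branch must avoid all edges of previously built units, which number up to $\ell'\cdot 2dm^5\approx d^2m^6$ (dominated by pendant edges), while Lemma~\ref{lem-diameter} applied with anchor sets of size $x$ only tolerates $d\rho(x)x$ avoided edges; since $\rho(x)\le \ep_1/\log^2 3$ and in the relevant range $\rho(x)\approx 1/m$, anchoring at a single center's neighbourhood ($x\approx d$) gives a budget of roughly $d^2/m$, hopelessly short of $d^2m^6$. You acknowledge that $x$ must be ``a suitably chosen polynomial in $m$ and $d$'', but you cannot get anchor sets of size $dm^9$ around a single vertex without reintroducing the ball-regrowth problem above. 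The paper resolves this with an aggregation and pigeonhole step (Claim~\ref{new-unit}): it sets up $m^{10}$ candidate centers and $dm^{15}$ candidate star-heads simultaneously, takes as anchor sets the union of all unoccupied leaves over all candidate centers and the union of all untouched heads (each of size at least $dm^9$, giving budget $d\rho(x)x\ge d^2m^8$), and argues that some single candidate center accumulates $\ell'+\eta d/2$ connections. It also wraps everything in an outer maximality argument (a maximal family of edge-disjoint units, deleting previous centers and unit edges and invoking Proposition~\ref{densi} to restore the average degree), which is what supplies $\ell'$ distinct high-degree centers. These are genuinely missing ideas rather than routine details, so the proposal as written has a gap.
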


We first see how to construct a $K_{\ell}$-immersion, $\ell=(1-5\eta)d$, using Lemma~\ref{lem-units}.

\begin{proof}[Proof of Lemma~\ref{lem-immersion-not-dense} when $d\ge \log^{200s}n$.] Given constants $\ep_1,\ep_2$ and integers $t\ge s\ge 2$, we choose $d_0$ to be sufficiently large with foresight. Then by \eqref{lower} and \eqref{mnd}, we obtain that $m$ is also sufficiently large.
Let $F_1,\ldots, F_{\ell'}$ be the $(\ell',m^5,2m)$-units guaranteed by Lemma~\ref{lem-units}, with distinct centers $v_1,\ldots,v_{\ell'}$, where $\ell'=(1-4\eta)d$. Now we pick instead, for $\ell''=(1-4.5\eta)d$, a subfamily $\{F_1,\ldots, F_{\ell''}\}$ of the $(\ell',m^5,2m)$-units and then connect pairs of these $\ell''$ centers in the following way.

\stepcounter{propcounter}
\begin{enumerate}[label = {\bfseries \Alph{propcounter}\arabic{enumi}}]
       \item\label{youtiao1} Greedily connect as many pairs $(v_i,v_j)$ of centers as possible through an $(\mathsf{Ext}(F_i),\mathsf{Ext}(F_j))$-path $P_{i,j}$ of length at most $m$.

       \item\label{youtiao3} In each $F_i$, a star is \emph{occupied} if a leaf of it was previously used as an endpoint in an $(\mathsf{Ext}(F_i),\mathsf{Ext}(F_k))$-path for some $k\neq i$.

       \item\label{youtiao2} Let $\mathsf{Ext}(F_i)$ and $\mathsf{Ext}(F_j)$ be the current pair to connect. Then an $(\mathsf{Ext}(F_i),\mathsf{Ext}(F_j))$-path shall avoid using
          \begin{itemize}
            \item any leaf of the occupied stars in $F_i$ or $F_j$ as an endpoint;
          	\item edges that are used in previous connections;
          	\item  all centers $v_p$, $p\in[\ell'']$ and ;
          	\item edges that are in branches of units.
          \end{itemize}
\end{enumerate}
In the process, a star in a unit is \emph{over-used} if at least half of its pendant edges were used in previous connections. Discard a unit if it has at least $\eta d/4$ over-used stars.
Note that an $(\mathsf{Ext}(F_i),\mathsf{Ext}(F_j))$-path together with the corresponding branches within $F_i$ and $F_j$ form a $(v_i,v_j)$-path of length at most $6m$. Thus the total number of edges used in all connections is at most
\begin{eqnarray}\label{totale}
{\ell''\choose 2}\cdot 6m\le 3d^2m,
\end{eqnarray}
while the total number of edges in branches of all units is at most
\begin{eqnarray}\label{branches}
\ell''\ell'2m\le2d^2m.
\end{eqnarray}
Thus, in each connection, we avoid using a set of at most $5d^2m$ edges and a set of at most $\ell''$ centers $v_i, i\in [\ell'']$ in~\ref{youtiao2}.

We claim that there are at least $\ell=(1-5\eta)d$ units survived (were not discarded), say $F_1,\ldots, F_{\ell}$. Indeed, all units are edge-disjoint and each unit discarded has at least $\eta d/4\cdot m^5/2$ of its edges used in all connections. Hence by \eqref{totale}, the total number of units discarded is at most
$$\frac{3d^2m}{\eta d\cdot m^5/8}< \eta d/2\le \ell''-\ell,$$ where the first inequality follows as $m$ is sufficiently large.

We now claim that we can connect all pairs of the units $F_1,\ldots, F_{\ell}$ in~\ref{youtiao1}. Indeed, let $\mathcal{P}$ be a maximal collection of paths $P_{i,j}$ in~\ref{youtiao1} between different units $F_i$ with $i\in [\ell]$. Assume for contradiction that there exists $\{i,j\}\in {[\ell]\choose 2}$ such that there is no $(\mathsf{Ext}(F_i),\mathsf{Ext}(F_j))$-path  in $\mathcal{P}$. Note that each unit is connected with less than $\ell''$ other units in~\ref{youtiao1}. Thus each $F_i$ (or $F_j$) has at least $\ell'-\ell''=\eta d/2$ stars not occupied in~\ref{youtiao3}, among which there are at least $\eta d/4$ stars that are not over-used. Thus, $v_i$ and $v_j$ can still reach a subset of at least
$$\eta d/4\cdot m^5/2\ge dm^4=:x$$
vertices in $\mathsf{Ext}(F_i)$ and $\mathsf{Ext}(F_j)$, denoted by $L_i,L_j$ respectively. Recall that in~\ref{youtiao2}, by \eqref{totale} and \eqref{branches}, we need to avoid a set $P$ of at most $5d^2m\le d(G)\cdot \rho(x)\cdot x $ edges and at most $d\le \rho(x)\cdot x/4$ centers in each connection. Hence, Lemma~\ref{lem-diameter} applied to $G$ with \[X_1=L_i,~ X_2=L_j,~ Y=\{v_1,v_2,\ldots,v_{\ell''}\},~ F=P\] gives a desired $(L_i,L_j)$-path of length at most $m$, a contradiction to the maximality in~\ref{youtiao1}. Finally, notice that extending these paths in $\mathcal{P}$ from the exterior to the corresponding centers of the units yields edge-disjoint $(v_i,v_j)$-paths, for all $\{i,j\}\in {[\ell]\choose 2}$, which yields a desired $K_{\ell}$-immersion.
\end{proof}


\subsection{Proof of Lemma~\ref{lem-units}: finding units}\label{sec-unit}
Using the $K_{s,t}$-free condition, we first show that after deleting a small set of arbitrary vertices, the remaining subgraph still has large average degree.
\begin{prop}\label{densi}
For integers $n,s,t$ and constants $\eta$ with $0<\eta<1, t\ge s\ge 2$, there exists $m_0\in \mathbb{N}$ such that the following holds for every $m\ge m_0$. If $G$ is an $n$-vertex $K_{s,t}$-free graph with $d(G)=d\ge m^{50s}$, then for any subset $Z\subseteq V(G)$ with $|Z|\le dm^{50}$, we have $d(G-Z)\ge d-\eta d$.
\end{prop}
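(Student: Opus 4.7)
The plan is to bound the number of edges of $G$ incident to $Z$ via the $K_{s,t}$-freeness and show that this loss is negligible compared with $e(G) = dn/2$. Split the edges incident to $Z$ into two groups: edges inside $G[Z]$, and edges in the bipartite subgraph between $Z$ and $V(G)\setminus Z$. The first group is bounded by the K\H{o}v\'ari--S\'os--Tur\'an theorem applied to $G[Z]$, giving at most $O(|Z|^{2-1/s})$ edges. For the second group, I would invoke Lemma~\ref{bipartite} with the two parts $Z$ and $V(G)\setminus Z$; choosing the part $V(G)\setminus Z$ in the role of the side taken to the power $1-1/s$ gives the bound $O(n^{1-1/s}|Z|)$, which is the smaller of the two bipartite bounds when $|Z|\le n$.

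Combining these, the number of edges deleted by removing $Z$ is at most $c_1 n^{1-1/s}|Z| + c_2 |Z|^{2-1/s}$ for constants depending only on $s,t$. Since $e(G-Z)\ge dn/2 - c_1 n^{1-1/s}|Z| - c_2|Z|^{2-1/s}$ and $|V(G-Z)|\ge n-|Z|$, the desired conclusion $d(G-Z)\ge (1-\eta)d$ reduces, after a short rearrangement, to the single inequality
\begin{equation*}
2c_1 n^{1-1/s}|Z| + 2c_2 |Z|^{2-1/s}\le \eta d n.
\end{equation*}

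To verify this, I would plug in the hypotheses $|Z|\le dm^{50}$ and $d\ge m^{50s}$, together with the lower bound $n \ge c\, d^{s/(s-1)}$ coming from \eqref{lower}. Dividing the above inequality through by $dn$, the first term becomes a constant multiple of $n^{-1/s}|Z|/d\le m^{50}/n^{1/s}$, which is $o(1)$ in $m$ since $n^{1/s}\ge c'\, m^{50 s/(s-1)}$ and $s/(s-1)>1$. The second term becomes a constant multiple of $|Z|^{2-1/s}/(dn) \le (dm^{50})^{2-1/s}/(dn)$, which again is $o(1)$ once one substitutes the lower bound on $n$ and checks that the resulting exponent of $d$ dominates $50s\cdot(2-1/s)$; this is a routine calculation that works uniformly for every $s\ge 2$. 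Taking $m_0$ large enough so that both terms are at most $\eta/2$ completes the proof.

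The only potential obstacle is making sure the exponent bookkeeping goes through for the tightest case $s=2$, where $d^{s/(s-1)} = d^2$ is the weakest lower bound on $n$; however, the hypothesis $d\ge m^{50s}$ is deliberately much larger than needed, so there is comfortable slack. The rest is mechanical, and no tool beyond K\H{o}v\'ari--S\'os--Tur\'an and its bipartite version Lemma~\ref{bipartite} is required.
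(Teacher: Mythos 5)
Your proof is correct and follows essentially the same route as the paper: split the lost edges into $e(G[Z])$ plus the bipartite part between $Z$ and $V(G)\setminus Z$, bound both by K\H{o}v\'ari--S\'os--Tur\'an, and verify the arithmetic using $|Z|\le dm^{50}$, $d\ge m^{50s}$ and \eqref{lower}. The only (harmless) difference is which side of the bipartition you raise to the power $1-1/s$ in Lemma~\ref{bipartite} --- the paper uses $c|Z|^{1-1/s}(n-|Z|)$ while you use the smaller bound $cn^{1-1/s}|Z|$ --- and both comfortably yield $o(\eta dn)$.
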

\begin{proof}
  We may assume that $|Z|\ge \eta d$ as otherwise it is trivial that $d(G-Z)\ge d-\eta d$. Consider the bipartite subgraph $G_1:=G[Z, V(G)\setminus Z]$. Since $G_1$ is $K_{s,t}$-free, by Lemma \ref{bipartite}, $e(G_1)\le c|Z|^{1-1/s}(n-|Z|)\le c (dm^{50})^{1-1/s}n\le \frac{cdn}{m^{50/s}}$, for some constant $c>0$, where the last inequality follows as $d\ge m^{50s}$. Also, $e(G[Z])\le\gamma|Z|^{2-1/s}\le \gamma(dm^{50})^{2-1/s}\le \frac{\gamma dn}{(dm^{50})^{1/s}}$,  for some $\gamma>0$, where the last inequality follows from \eqref{lower} and the assumption that $d\ge m^{50s}$. Thus
   $$d(G-Z)=\frac{nd-2e(G_1)-2e(G[Z])}{n-|Z|}\ge d-\frac{2cd}{m^{50/s}}-\frac{\gamma d}{d^{1/s}m^{50/s}}\ge d-\eta d,$$
  where the last inequality follows as long as $m$ is sufficiently large (with respect to $\eta$).
\end{proof}
Now we are ready to prove Lemma~\ref{lem-units}.
\begin{proof}[Proof of Lemma~\ref{lem-units}]

Let $\mathcal{F}=\{F_1, F_2,\ldots, F_k\}$ be a maximal collection of pairwise edge-disjoint $(\ell', m^5, 2m)$-units as desired . Let $Z$ be the centers of all units in $\mathcal{F}$ and $B$ be their edge set. Suppose for contrary that $|k|< \ell'$. Let $G':=(G-Z)\setminus B$, i.e.~the subgraph on vertex set $V(G)\setminus Z$ with edges in $B$ removed. Next we shall find in $G'$ one more $(\ell',m^5,2m)$-unit to reach a contradiction.

By \eqref{eq-n-large}, we have $|B|\le \ell'\cdot 2dm^5\le d^2m^6\le \frac{1}{2}\eta d(n-|Z|)$.
Together with Proposition \ref{densi}, we have
$$d(G')\ge d(G-Z)-2|B|/(n-|Z|)\ge d-2\eta d.$$
Then we claim that we can find in $G'$ vertex-disjoint stars, say $S(v_i)$ centered at some $v_i$, $i=1,\ldots,m^{10}$, each of size $d-3\eta d$ and $S(u_j)$ centered at $u_j$, $j=1,\ldots, dm^{15}$, each of size $m^{10}$. Indeed, let $U$ be the vertex set of a maximal collection $\mathcal{S}$ of vertex-disjoint of stars constructed as above. If $\mathcal{S}$ is not as desired, then $|U|\le dm^{30}\le d(G')m^{50}$ and Proposition \ref{densi} applied to $G'$ with $Z=U$ guarantees that $d(G'-U)\ge (1-\eta)d(G')\ge d-3\eta d$. This allows us to find one more star as desired, contradicting with the maximality of $\mathcal{S}$.

We use $L(v_i)$ to denote the set of all leaves in each star $S(v_i)$, $i\in\{1,\ldots,m^{10}\}$. Let $V=\{v_1,\ldots,v_{m^{10}}\}$. Now we shall use these vertex-disjoint stars to construct a new $(\ell',m^5,2m)$-unit in $G'$ as follows.
\stepcounter{propcounter}
\begin{enumerate}[label = {\bfseries \Alph{propcounter}\arabic{enumi}}]
    \item\label{B1} Connect as many pairs $(v_i,u_j)$ as possible through an $(L(v_i), u_j)$-path of length at most $m$, such that there is at most one path between any pair.

       \item\label{B2} For each $v_i$, a leaf $v\in L(v_i)$ is \emph{occupied} if it is previously used as an endpoint of a path in \ref{B1}.	
	\item\label{B3} Let $(v_i,u_j)$ be the current pair to connect. Then an $(L(v_i), u_j)$-path shall avoid using
          \begin{itemize}
            \item any leaf occupied in $L(v_i)$ as an endpoint;
	
	        \item edges in $\bigcup_{p=1}^{m^{10}}S(v_p)$, $\bigcup_{q=1}^{dm^{15}}S(u_q)$ and $B$;

             \item all vertices in $Z\cup V$.
       \end{itemize}

\end{enumerate}
\begin{claim}\label{new-unit}
  There is a vertex $v_i$ connected to at least $s=(\ell'+\eta d/2)$ distinct centers $u_j$.
\end{claim}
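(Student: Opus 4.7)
I would prove the claim by contradiction. Suppose that every $v_i$ is connected in~\ref{B1} to fewer than $\ell'+\eta d/2$ distinct $u_j$'s; I will produce one more admissible path, contradicting the maximality. The naive move --- fix a single $v_i$ and apply Lemma~\ref{lem-diameter} from the unoccupied portion of $L(v_i)$ to a $u_j$ not yet connected --- fails because this source set has only about $\eta d/2$ vertices, while the forbidden vertex set $Y:=Z\cup V$ has size on the order of $d$, so the available ``budget'' $\rho(x)x/4$ in Lemma~\ref{lem-diameter} is too small to absorb $|Y|$. This mismatch is the main obstacle, and the key idea is to aggregate over all $m^{10}$ starter vertices at once, inflating the source set by a factor of $m^{10}$ and thereby pushing the budget comfortably above $|Y|$.

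Under the hypothesis, for each $i$ set $W_i:=L(v_i)\setminus\{\text{occupied leaves}\}$, so that $|W_i|\ge (d-3\eta d)-(\ell'+\eta d/2)=\eta d/2\ge \ep_2 d/2$ using $\eta\ge 5\ep_2$. Since the stars $S(v_i)$ are vertex-disjoint, the $W_i$ are pairwise disjoint and $W:=\bigcup_{i} W_i$ has size at least $m^{10}\eta d/2$. Writing $c_i$ for the number of $u_j$'s connected to $v_i$ and $T:=\sum_i c_i$ for the total number of connections, the hypothesis yields $T<m^{10}(\ell'+\eta d/2)<dm^{10}$. Since each connection touches at most one $u_j$, the set $U^*:=\{u_j:\text{no }v_i\text{ is connected to }u_j\}$ has $|U^*|\ge dm^{15}-T>dm^{15}/2$.

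I would then apply Lemma~\ref{lem-diameter} with $X_1:=W$, $X_2:=U^*$, $Y:=Z\cup V$ (so $|Y|\le d+m^{10}$), and the forbidden edge set $F$ consisting of $B$, all star edges $\bigcup_{p} S(v_p)\cup\bigcup_{q} S(u_q)$, and the edges of all previously constructed paths in~\ref{B1}; a crude count using $T\le dm^{10}$ and path length at most $m$ gives $|F|\le 2(d^2m^6+dm^{25})$. For $x:=\min(|W|,|U^*|)\ge m^{10}\eta d/2$, the definition of $\rho$ in~\eqref{epsilon} yields $\rho(x)x\ge \eta\ep_1 dm^{10}/(C\log^2 m)$ for some constant $C$; using $d\ge m^{50s}$ from~\eqref{eq-n-large}, this comfortably exceeds both $4|Y|$ and $|F|/d(G)$. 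Hence Lemma~\ref{lem-diameter} furnishes a $(W,U^*)$-path $P$ of length at most $m$ inside $(G\setminus F)-Y$. One endpoint of $P$ lies in some $W_i\subseteq L(v_i)$ and the other is some $u_j\in U^*$ not yet connected to any $v_k$; thus $P$ is a new admissible $(L(v_i),u_j)$-path obeying every restriction of~\ref{B3}, and appending it to the collection in~\ref{B1} strictly increases the number of connections, contradicting its maximality.
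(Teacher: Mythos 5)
Your proof is correct and follows essentially the same route as the paper: assume each $v_i$ reaches fewer than $s$ centres, aggregate the unoccupied leaves of all $v_i$-stars and the still-available $u_j$-centres into two large sets, verify that the forbidden vertices ($Z\cup V$) and edges (units, stars, previous paths) fit within the budgets $\rho(x)x/4$ and $d(G)\rho(x)x$, and invoke Lemma~\ref{lem-diameter} to produce one more admissible connection, contradicting maximality. The only cosmetic difference is that the paper takes the available centres to be those of $u_j$-stars wholly disjoint from previous paths rather than simply the unconnected ones; both choices work.
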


\begin{proof}
Suppose to the contrary that each $v_i$ is connected to less than $s$ centers $u_j$. Then the number of vertices used in all paths is at most $d\cdot m^{10} \cdot m= dm^{11}.$
Thus, there are at least $dm^{15}/2$ $u_j$-stars that are completely vertex-disjoint from all those paths, and there are at least $dm^{15}/2>dm^9=:x$ available centers from $u_j$-stars, say $U'$. Inside each $v_i$-star, there are at least $d(G)-3\eta d-s=\eta d/2$ leaves not occupied. Thus, there is a set $V'$ of at least $\eta d/2\cdot m^{10}>x$ leaves not occupied from all $v_i$-stars.

Recall that there are at most $d^2m^6$ edges in $B$, at most $dm^{11}$ edges used in all paths, at most $dm^{10}$ edges in $v_i$-stars and at most $dm^{25}$ edges in $u_j$-stars. Thus, in total, we avoid at most $d^2m^7\le d(G)\cdot \rho(x)\cdot x $ edges and at most $|Z|+|V|\le d+ m^{10}\le\rho(x)\cdot x/4$ vertices. Therefore, by Lemma~\ref{lem-diameter} with $X_1=U',X_2=V',Y=Z\cup V,F=B$, we can find a path of length at most $m$ between $U'$ and $V'$ in $G'$, resulting in one more pair of $v_i,u_j$ connected, a contradiction.
\end{proof}

Let $v_i$, $u_1,u_2,\cdots,u_s$ be the centers guaranteed by Claim \ref{new-unit} with all $(v_i,u_j)$-paths pairwise edge disjoint. If the set of interior vertices in all $(v_i,u_j)$-paths is disjoint from $\bigcup_{j=1}^{s} S(u_j)$, then they form a desired unit in $G'$. Otherwise, we discard a star $S(u_{j})$ if at least half of its leaves are used in  $(v_i,{u_{j}})$-paths. We claim that there are at least $\ell'$ $u_j$-stars left, say $S(u_1),\ldots, S(u_{\ell'})$. Indeed, recall that all $u_j$-stars are vertex-disjoint and the number of vertices in all all $(v_i,u_j)$-paths is at most $sm$. Thus the number of stars discarded is at most
$$\frac{sm}{m^{10}/2}\le \eta d/2\le s-\ell'.$$
Therefore, each $u_j$-stars left has at least $m^{10}/2\ge m^5$ leaves that are not used in a $(v_i,u_{j'})$-path for any $j' \in [s]$. These stars, together with the corresponding paths to $v_i$, form a desired unit in $G'$.

\end{proof}
\section{Embedding immersions in sparse expanders}\label{sec-sparse}

For the proof of the sparse case, we first deal with a special case when the maximum degree is somewhat bounded. Formally it is stated as follows.
\begin{lemma}\label{bounded max-degree}
Let $0<\ep_1\le1/400, 0<\ep_2<1/2, \eta\ge\max\{\frac{40\ep_1}{\log 3},5\ep_2\}$ and $2\le s\le t$. Then there exists $d_0$ such that the following holds for any $d\ge d_0$. Let $G$ be an $n$-vertex $K_{s,t}$-free $(\ep_1,\ep_2d)$-robust-expander with $d(G)=d< \log^{200s}n$ and $\Delta(G)\le d\log^{120}n.$ Then $G$ contains a clique immersion of order at least
     $\ell'=(1-4\eta)d$.
\end{lemma}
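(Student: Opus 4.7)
The plan is to select $\ell'$ branch vertices of degree at least $\ell'-1$ in $G$ and greedily route edge-disjoint short paths between each pair via iterated application of Lemma \ref{lem-diameter}.

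First, I would identify the branch vertices. A double-counting argument from $\sum_v \deg_G(v) = nd$ and $\Delta(G) \le d\log^{120} n$ shows that at least $\Omega(n\eta/\log^{120} n)$ vertices of $G$ have degree at least $\ell'-1 = (1-4\eta)d-1$. Combined with the K\H{o}v\'{a}ri--S\'{o}s--Tur\'{a}n bound $n/d = \Omega(d^{1/(s-1)})$ from \eqref{lower} and taking $d \ge d_0$ sufficiently large, this count exceeds $\ell'$, yielding branch vertices $v_1,\dots,v_{\ell'}$. For each $v_i$ and each $j \neq i$, I would reserve a distinct neighbor $u_{ij} \in N_G(v_i)$ and the incident edge $v_iu_{ij}$ to serve as the first edge of the eventual path $P_{ij}$.

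Next, I would enumerate the $\binom{\ell'}{2}$ pairs arbitrarily, maintaining the set $F$ of edges already committed to some $P_{i'j'}$, initialised to contain all $\ell'(\ell'-1)$ reserved edges. For each pair $(i,j)$, I would temporarily remove $v_iu_{ij}$ and $v_ju_{ji}$ from $F$ and then grow BFS balls $B_{ij}$ around $u_{ij}$ and $B_{ji}$ around $u_{ji}$ in $G \setminus F$ until each has size at least $dm^{10}$. This is possible because $\deg_G(u_{ij})\ge d/2$, together with the bounded-max-degree hypothesis controlling $F$-edges at $u_{ij}$, ensures $|B_G^1(u_{ij})|\ge \ep_2 d/2$ after one BFS step, whereupon the robust-expander property and \eqref{eq-eps-not-small} allow continued growth since $|F|$ stays well within the edge-deletion budget $d\cdot\rho(x)\cdot x$. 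Then applying Lemma \ref{lem-diameter} with $X_1 = B_{ij}$, $X_2 = B_{ji}$, the current $F$, and $Y = \{v_1,\dots,v_{\ell'}\}$ extracts an $(X_1,X_2)$-path of length at most $m$; concatenating with the BFS paths through $B_{ij}$ and $B_{ji}$ back to $u_{ij}, u_{ji}$ and the reserved edges $v_iu_{ij}, u_{ji}v_j$ yields the desired $P_{ij}$ of total length $O(m\log m)$, whose edges are then added to $F$.

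The cumulative $|F|$ is $O(d^2 m\log m)$, safely within the edge budget $d\cdot\rho(|X_1|)\cdot|X_1|\ge d^2 m^9$ that Lemma \ref{lem-diameter} tolerates at each step, and $|Y|=\ell' \le d \le dm^9/4$ satisfies the lemma's vertex-avoidance hypothesis. The hard part will be maintaining the local invariant that at each iteration the number of $F$-edges incident to the current $u_{ij}$ stays below $d/4$, so that the BFS ball really does expand past $\ep_2 d/2$ after a single step. This requires a careful amortised accounting that exploits $\Delta(G) \le d\log^{120} n$ to cap how often any single vertex can be traversed across all $\binom{\ell'}{2}$ paths constructed so far; together with securing $\ell'$ high-degree branch vertices in the first step for all admissible ranges of $d$, this local-congestion control is the technical heart of the argument.
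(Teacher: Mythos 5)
Your overall architecture (pick high-degree branch vertices, grow balls around them in $G\setminus F$, connect the balls with Lemma~\ref{lem-diameter}) matches the paper's, but there is a genuine gap at exactly the point you flag as "the technical heart": the growth of the ball from size $\Theta(d)$ up to size $dm^{10}$ in $G\setminus F$. The robust expansion only tolerates deleting $d(G)\cdot\rho(x)\cdot x$ edges at scale $x$, and for $x\approx \ep_2 d$ one has $\rho(x)=\Theta(1)$, so the budget is only $\Theta(d^2)$, whereas the accumulated set $F$ has $\Theta(d^2 m\log m)\gg d^2$ edges. So your assertion that "$|F|$ stays well within the edge-deletion budget" is false throughout the intermediate range $d\lesssim x\lesssim dm\cdot\mathrm{polylog}$, and nothing prevents the $d^2m$ used edges from concentrating inside the small ball you are trying to grow (indeed, the ball around $u_{ij}$ contains $v_i$ at distance $1$, where $\approx\ell'$ reserved edges are already deleted). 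Your proposed amortised control of $F$-edges incident to the single vertex $u_{ij}$ does not address this: what must be bounded is the number of $F$-edges crossing the boundary of the entire growing ball at every radius, and bounding per-vertex congestion via $\Delta(G)\le d\log^{120}n$ gives no such bound. (A secondary issue: $\deg_G(u_{ij})\ge d/2$ is unjustified, as the lemma has no minimum-degree hypothesis; neighbours of a high-degree vertex may have tiny degree.)

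The missing idea is structural, and it is what Lemma~\ref{inner} encodes. The paper chooses the branch vertices not only of high degree but pairwise at distance $>3\kappa$ (possible precisely because $\Delta(G)\le d\log^{120}n$ makes $B^{3\kappa}(Y)$ small), forbids every path from entering the inner ball $B^r_G(v_p)$ of any non-endpoint branch vertex, and insists that the $\le\ell'$ paths leaving $v_i$ form edge-disjoint \emph{consecutive shortest paths} from $v_i$ within $B^r_G(v_i)$. Under that discipline, only the first $p+2$ vertices of each such path can meet the $p$-th sphere around $v_i$, so at most $\ell'(p+1)$ deleted edges cross that sphere -- an amount the expansion can absorb level by level (this is inequality~\eqref{aug1} in the paper). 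Growing the ball around $v_i$ itself (not around the $u_{ij}$, whose balls would overlap and share congested regions) then reaches size $d^2\log^7 n$ within radius $r$, after which Lemma~\ref{outer} and Lemma~\ref{lem-diameter} finish as you describe. Without imposing the consecutive-shortest-path structure and the far-apart/ball-avoidance conditions, the greedy routing in your proposal cannot be completed.
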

For the proof of this case, we need two technical lemmas which ensure that a small ball can expand robustly even after deleting a small set of vertices or edges.

\subsection{Technical lemmas: robust expansion of a ball}\label{subs40}

We first need the following lemma in \cite{hkl}, 
which ensures that a large set of vertices expands well even after deleting a small set of vertices.
\begin{lemma}[Proposition 3.5 in \cite{hkl}]\label{outer}
  Let $0<\ep_1\le1/400, 0<\ep_2<1/2$, and $G$ be an $n$-vertex $(\ep_1,\ep_2d)$-robust-expander with $d(G)= d$. If $X, Y\subseteq V(G)$ are  sets such that $|X|=x\ge \ep_2d, |Y|\le \rho(x)x/4$, then for every $i\le \log n$, $$|B^i_{G-Y}(X)|\ge\exp(\sqrt[4]{i}).$$
\end{lemma}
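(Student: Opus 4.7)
The plan is to track the ball sizes $B_i := B^i_{G-Y}(X)$ iteratively, bootstrapping the robust-expansion hypothesis of $G$ into a multiplicative growth rate for the $B_i$'s and then solving the resulting recursion. Throughout I would assume the natural normalisation $X \cap Y = \emptyset$, so that $B_0 = X \subseteq V(G)\setminus Y$.

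The first key step is a one-step growth estimate: for every $S \subseteq V(G)\setminus Y$ with $x \leq |S| \leq n/2$,
\[
  |N_{G-Y}(S)| \;\geq\; \tfrac{3}{4}\,\rho(|S|)\,|S|.
\]
To prove this, I would apply the robust-expander definition to $G$ with empty forbidden subgraph $F = \emptyset$, yielding $|N_G(S)| \geq \rho(|S|)\,|S|$. Since $N_{G-Y}(S) = N_G(S)\setminus Y$, the $G-Y$ neighbourhood differs from $N_G(S)$ by at most $|Y|$ vertices, and this additive loss is absorbed via the monotonicity of $x \mapsto \rho(x)\,x$ on $[\ep_2 d/2, n]$ (noted just after the definition of $\rho$): from $|S| \geq x$ we get $|Y| \leq \rho(x)\,x/4 \leq \rho(|S|)\,|S|/4$. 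This is the one subtle point of the argument and what I expect to be the main obstacle: the robust-expander hypothesis is phrased for \emph{edge}-deletion, whereas the statement involves a \emph{vertex} deletion. Encoding $Y$ as a deleted edge set would in general cost up to $|Y|\cdot\Delta(G)$ edges, which can exceed the budget $d\cdot\rho(|S|)|S|$; the monotonicity observation is precisely what bypasses this.

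The second step is to iterate. As long as $|B_i| \leq n/2$, applying the estimate with $S = B_i$ (valid since $|B_i| \geq |X| = x \geq \ep_2 d$) gives $|B_{i+1}| \geq |B_i|\bigl(1 + \tfrac{3}{4}\rho(|B_i|)\bigr)$. Substituting $u_i := \log(15|B_i|/(\ep_2 d))$, so that $\rho(|B_i|) = \ep_1/u_i^2$ by definition, and taking logarithms (using $\log(1+r) \geq r/2$ for $r \leq 1$) converts the recursion to
\[
  u_{i+1} \;\geq\; u_i + \tfrac{3\ep_1}{8\,u_i^2}.
\]
The convexity inequality $u_{i+1}^3 - u_i^3 \geq 3u_i^2(u_{i+1}-u_i)$ then telescopes to $u_i^3 \geq u_0^3 + \tfrac{9\ep_1}{8}\,i$, and since $u_0 \geq \log 15 > 0$ this gives $u_i = \Omega(\ep_1^{1/3}\,i^{1/3})$, hence $|B_i| \geq (\ep_2 d/15)\exp(\Omega(\ep_1^{1/3}\,i^{1/3}))$. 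Since $i^{1/3}$ dominates $i^{1/4}$, this exceeds the target $\exp(\sqrt[4]{i})$ for all $i \geq i_0(\ep_1)$; for $i \leq i_0$ the target is a bounded constant depending only on $\ep_1$, and is absorbed by the trivial containment $|B_i| \geq |X| \geq \ep_2 d$ provided $d$ is large enough (the operating regime of the application). If at some step $|B_i|$ first exceeds $n/2$, then $|B_j| \geq |B_i| > n/2 \geq \exp(\sqrt[4]{\log n})$ for every subsequent $j \leq \log n$, so the conclusion continues to hold for free.
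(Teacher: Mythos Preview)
The paper does not supply a proof of this lemma at all: it is quoted verbatim as Proposition~3.5 of Haslegrave, Kim and Liu~\cite{hkl} and used as a black box. So there is no ``paper's own proof'' to compare your attempt against.

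That said, your argument is the standard one and is essentially correct. The one-step estimate $|N_{G-Y}(S)|\ge\tfrac{3}{4}\rho(|S|)|S|$ via the monotonicity of $x\mapsto x\rho(x)$ is exactly how one trades the vertex-deleted set $Y$ against the expansion budget, and your cubic-telescoping solution of the recursion $u_{i+1}\ge u_i+\tfrac{3\ep_1}{8u_i^2}$ is clean and gives the right $i^{1/3}$ exponent (comfortably beating the claimed $i^{1/4}$). The only soft spot, which you already flag, is the absorption of the constant threshold $i_0=i_0(\ep_1)$ by $|B_i|\ge x\ge\ep_2 d$, and similarly the inequality $n/2\ge\exp((\log n)^{1/4})$ when the ball first exceeds $n/2$: both require $d$ and $n$ to be larger than an absolute constant depending on $\ep_1,\ep_2$. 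This hypothesis is not written in the lemma statement but is in force throughout the paper (via the ``there exists $d_0$'' clauses in Lemmas~\ref{lem-immersion-not-dense} and~\ref{bounded max-degree}), so the omission is harmless in context.
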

To state the second technical lemma, we need the following notion of consecutive shortest paths.
\begin{defn}
	For a set $X\subseteq W$ of vertices, the paths $P_1,\cdots, P_q$ are \emph{edge-disjoint consecutive shortest
		paths from $X$ within $W$} in $G$ if the following holds. For each $i\in [q]$, $P_i$ is a shortest path from $X$
	to a vertex in $W$ in the graph $G[W]\setminus\bigcup_{j\in[i-1]}E(P_j)$.
\end{defn}

\begin{lemma}\label{inner}
Let $0<\ep_1\le1/400, 0<\ep_2<1/2, \eta\ge\max\{\frac{40\ep_1}{\log 3},5\ep_2\}$. Then there exists $d_0:=d_0(\ep_1,\ep_2)$ such that the following holds for all $d\ge d_0$.
Suppose $G$ is an $n$-vertex $(\ep_1,\ep_2d)$-expander with $d(G)=d$ and $K,Z$ are disjoint sets of vertices with
$$|K|<n/2, ~ |Z|\le |K|\rho(|K|)/4, ~ |N_{G-Z}(K)|\ge \ell'+\ep_2d,$$
where $\ell'=(1-4\eta)d$.
Let $P_1,P_2,\cdots, P_q$ be edge-disjoint consecutive shortest
paths from $K$ within $G-Z$ such that $q<\ell'$ and denote $E=\bigcup_{j\in [q]}E(P_j)$. Then for any positive integers $t\in \mathbb{N}$ and $D$ with $D\le n/2$, we have \[|B^{t}_{(G-Z)\setminus E}(K)|\ge \min \left\{D, ~|K|\cdot \left(1+\frac{\ep_1}{2\log^2(15D/\ep_2d)}\right)^{t-1} \right\}.\]
\end{lemma}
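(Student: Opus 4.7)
The plan is to prove Lemma \ref{inner} by induction on $t$. The base case $t=1$ is immediate since $|B^1_{(G-Z)\setminus E}(K)|\ge |K|$. For the inductive step I set $X:=B^{t-1}_{(G-Z)\setminus E}(K)$. If $|X|\ge D$ the target bound already holds, so the work is in the case $|X|<D$; note that since $\rho$ is non-increasing this gives $\rho(|X|)\ge \ep_1/\log^2(15D/\ep_2 d)=2\alpha$, a factor-of-two cushion on the desired expansion rate $\alpha$.

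Before iterating I want $X$ to be reasonably large, so that the robust expander property applies. To this end, observe that each consecutive shortest path $P_j$ contains exactly one edge incident to $K$ (its first edge; any later edge re-entering $K$ would yield a strictly shorter shortest path from $K$). Hence $|\partial_G(K)\cap E|\le q<\ell'$, so the hypothesis $|N_{G-Z}(K)|\ge \ell'+\ep_2 d$ gives $|N_{(G-Z)\setminus E}(K)|\ge \ep_2 d$ and therefore $|X|\ge |B^1_{(G-Z)\setminus E}(K)|\ge \ep_2 d$ for every $t\ge 2$. It then suffices to show $|N_{(G-Z)\setminus E}(X)|\ge \alpha|X|$, since combining this with the inductive hypothesis yields $|B^t|\ge (1+\alpha)|X|\ge |K|(1+\alpha)^{t-1}$. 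Writing $F:=E\cap \partial_G(X)$ and using the identity $N_{G\setminus F}(X)=N_{G\setminus E}(X)$ (only boundary edges affect the external neighborhood), the robust expander property applied to $X$ in $G$ with edge set $F$ gives $|N_{G\setminus F}(X)|\ge \rho(|X|)|X|\ge 2\alpha|X|$, provided $|F|\le d\rho(|X|)|X|$. Subtracting the loss $|Z|\le |X|\rho(|X|)/4$, which transfers from the assumption $|Z|\le |K|\rho(|K|)/4$ via the monotonicity of $x\rho(x)$, will deliver $|N_{(G-Z)\setminus E}(X)|\ge (3/4)\rho(|X|)|X|\ge (3/2)\alpha|X|$, comfortably meeting the target.

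The hard part of the argument is verifying the hypothesis $|F|\le d\rho(|X|)|X|$, that is, controlling how many edges of $E$ cross the $G^*$-ball $X$, where $G^*:=(G-Z)\setminus E$. The key leverage will come from the \emph{consecutive} structure of the paths: since $G^*\subseteq H_j:=(G-Z)\setminus E_{<j}$, distances only grow when passing to $G^*$, so the $\ell$-th vertex $v_\ell$ on $P_j$ satisfies $d_{G^*}(K,v_\ell)\ge d_{H_j}(K,v_\ell)=\ell$; in particular only the prefix $v_0,\dots,v_{t-1}$ of $P_j$ can possibly lie in $X=B^{t-1}_{G^*}(K)$. I would combine this prefix-localization with the geodesic structure of $P_j$ in $H_j$ to bound the number of $\partial_G(X)$-crossings made by each $P_j$ by a very small quantity, yielding $|F|$ of order $q$ up to polylogarithmic factors. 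This sits comfortably inside the expander budget $d\rho(|X|)|X|\ge \Omega(\ep_1\ep_2 d^2/\log^2(15D/\ep_2 d))$ once $d\ge d_0$ is large enough. Making this crossing bound precise, via a careful comparison between BFS layers in $H_j$ and in $G^*$, is the delicate technical heart of the proof and is where the consecutive property of the shortest paths (rather than mere edge-disjointness) becomes essential.
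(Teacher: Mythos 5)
Your skeleton is the same as the paper's: grow the ball layer by layer, use the consecutive-shortest-path property to localise the edges of $E$ crossing $\partial_G(X_p)$ to the short prefixes of the $P_j$'s, apply the robust expansion to the boundary-edge set, and absorb $|Z|\le |K|\rho(|K|)/4\le |X_p|\rho(|X_p|)/2$ using the monotonicity of $x\rho(x)$. Your preliminary steps (base case, $|X|\ge\ep_2 d$ via the fact that each $P_j$ has exactly one edge at $K$, the factor-of-two cushion $\rho(|X|)\ge 2\alpha$ when $|X|\le D$) all match the paper and are fine.

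However, there is a genuine gap exactly where you flag one. First, the crossing bound you hope for is not the right one: the prefix-localisation argument (one endpoint of each crossing edge of $P_j$ lies in $X_p=B^p$, hence is among the first $p+1$ vertices of $P_j$) gives $|E\cap\partial_G(X_p)|\le q(p+1)$, i.e.\ a bound growing \emph{linearly in the radius} $p$, not ``of order $q$ up to polylogarithmic factors''. Second, your plan to absorb this into the budget via $d\rho(|X|)|X|\ge\Omega(\ep_1\ep_2 d^2/\log^2(15D/\ep_2 d))$ only works while $p=O(d/\log^2(\cdot))$; since the lemma allows arbitrary $t$ (and even the ``effective'' $t$ needed to reach $D$ can far exceed $d$ when $n\gg d$), this does not close the argument. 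The paper's resolution is a \emph{nested induction}: one shows $|X_{p+1}|\ge|X_p|+(p+1)/2$, hence $|X_p|\ge p^2/4$, and therefore $|X_p|\rho(|X_p|)> p+1$ for all $p$ beyond a threshold $p_0=O(\sqrt d)$ (the cases $p\le p_0$ being handled by $|X_p|\rho(|X_p|)\ge\ep_2 d\,\rho(\ep_2 d)>p_0+1$ for $d\ge d_0$). This yields $|E_p|<\ell'(p+1)<d\cdot|X_p|\rho(|X_p|)$ at every step, which is what licenses the robust expansion. Without this quadratic-growth lemma — which is precisely the ``delicate technical heart'' you defer — the proof is incomplete.
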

\medskip

\begin{proof}
 Let $G'=G-Z$. For each $0\le p\le t-1$, let $X_p:=B^p_{G'\setminus E}(K)$ and denote by $E_p$ the set of edges in $E$ that go from the set $X_p$ to $N_{G'}(X_p)$. Then we have
 \begin{eqnarray}\label{aug}
 N_{G'\setminus E}(X_p)=N_{G'\setminus E_p}(X_p).
 \end{eqnarray}
 Note that only the first $p+2$ vertices of each $P_i$ can intersect $N_{G'}(X_p)$ for otherwise a shorter path can be found, contradicting the choices $P_1,\ldots,P_q$.
 Therefore $|E_p\cap E(P_i)|\le p+1$ for each $i\in [q]$ and it follows that for each $0\le p\le t-1$,
  \begin{eqnarray}\label{aug0}
  |E_p|\le q(p+1)<\ell'(p+1).
\end{eqnarray}
We first observe that for all $p\in[t-1]$, $|X_p|\ge |X_1|\ge |N_{G'}(K)|-|E_0|\ge\ep_2 d$, where we use \eqref{aug0} and the hypothesis of the lemma for the last inequality. Next we claim that for all $p\in[t-1]$, it holds that
 \begin{eqnarray}\label{aug1}
     |X_p|\cdot \rho(|X_p|)>p+1,
\end{eqnarray}
and we shall prove this later.
Indeed, if this holds, then we have that $|E_p|<\ell'(p+1)<d(G)\cdot|X_p|\cdot \rho(|X_p|)$ and it follows from the expansion property as in \eqref{expansion} that
 \begin{eqnarray}\label{aug2}
|N_{G\setminus E_p}(X_p)|\ge |X_p|\cdot \rho(|X_p|).
\end{eqnarray}
Since $|X_p|\ge |K|$ and $x\rho(x)$ is increasing in $x$, we obtain from the assumption on $|Z|$ that
 $$|Z|\le |K|\rho(|K|)/4<|X_p|\cdot \rho(|X_p|)/2,$$
 and it follows from \eqref{aug} and \eqref{aug2} that
 \begin{eqnarray}\label{half}
     |N_{G'\setminus E}(X_p)|=|N_{G'\setminus E_p}(X_p)|\ge |N_{G\setminus E_p}(X_p)|-|Z|\ge|X_p|\cdot \rho(|X_p|)/2.
\end{eqnarray}
We may assume $|X_p|\le D$ for all $1\le p\le t-1$, otherwise we are done. Now we have
  \begin{align}\label{eq1}
    |N_{G'\setminus E}(X_p)| &\nonumber
     \ge |X_p|\cdot \rho(|X_p|)/2=|X_p|\cdot \frac{\ep_1}{2\log^2(\frac{15|X_p|}{\ep_2d})} \\ \nonumber
     & \ge|X_p|\cdot\frac{\ep_1}{2\log^2\left(15D/\ep_2d\right)}, \nonumber
  \end{align}
  where the equality follows as $\ep_2 d\le |X_p|\le D$.
  Thus by definition, it follows that $|X_{p+1}|=|B^{1}_{G'\setminus E}(X_p)|=|X_p|+|N_{G'\setminus E}(X_p)|\ge|X_p|\left(1+\frac{\ep_1}{2\log^2(15D/\ep_2d)}\right)$ for each $p\in [t-1]$, and thus $$|X_t|\ge |K|\cdot \left(1+\frac{\ep_1}{2\log^2(15D/\ep_2d)}\right)^{t-1}.$$

Now it remains to prove \eqref{aug1}, which we will show by induction on $p$. Let $p_0$ be the least integer such that for each $p\ge p_0$, we have $p^2/4\cdot \rho(p^2/4)\ge p+1$. Then $p_0=O(\sqrt{d})$. The base cases $1\le p\le p_0$ easily follow since $|X_p|\cdot \rho(|X_p|)\ge|X_1|\cdot \rho(|X_p|)>\ep_2 d\cdot \rho(\ep_2 d)>p_0+1\ge p+1$ holds whenever $d$ is sufficiently large. Suppose $p>p_0$, and assume that \eqref{aug1} holds for all $p'$ with $1\le p'\le p-1$. Then $|E_{p'}|<\ell'(p'+1)<d(G)\cdot|X_{p'}|\cdot \rho(|X_{p'}|)$, which together with the expansion property from \eqref{half} implies that
\begin{align}
   |X_{p'+1}| &\nonumber
     \ge |X_{p'}|+|X_{p'}|\cdot \rho(|X_{p'}|)/2 \\ \nonumber
     & \ge|X_{p'}|+(p'+1)/2. \nonumber
\end{align}
Therefore, $|X_p|\ge|X_{1}|+\frac{2+3+\cdots+p}{2}\ge p^2/4$ and $$|X_p|\cdot \rho(|X_p|)\ge p^2/4\cdot \rho(p^2/4 )\ge p+1,$$
where the last inequality follows since $p>p_0$. This completes the proof of \eqref{aug1}.
\end{proof}

\subsection{Almost regular sparse expander: proof of Lemma~\ref{bounded max-degree}}\label{subs41}

Now we are ready to prove Lemma~\ref{bounded max-degree}. The proof idea is to choose vertices, say $v_1,v_2,\cdots,v_{\ell'}$, that are pairwise far apart to be the branch vertices of our clique immersion. To achieve this, we grow two nested balls around each $v_i$, one inner ball $B^r(v_i)$ and one outer ball $B^{\kappa+r}(v_i)$ for integers $r\ll \kappa$. Then we try to connect all pairs $v_i, v_j$ using a shortest path between the outer balls around them while avoiding (1) all edges in the inner balls of other branch vertices, that is, $\bigcup_{p\neq i,j}E(B^r(v_p))$; (2) all edges used in previous connections. Using the robust expansion guaranteed by the Lemma \ref{outer} and Lemma \ref{inner}, we are able to regrow new inner and outer balls around each $v_i$ to be large enough to enable us to connect $v_i$ to more branch vertices.\medskip
\begin{proof}[Proof of Lemma~\ref{bounded max-degree}]
Let $\kappa=\lceil\log n/(800s\log\log n)\rceil$, $r=\lceil(\log\log n)^5\rceil$ and $d_0$ be a sufficiently large integer. First, we claim that there are at least $d$ vertices of degree at least $d(G)-2\eta d$ which are pairwise a distance at least $3\kappa+1$ far apart. Indeed, letting  $L$ be the set of vertices with degree at least $d(G)-2\eta d$, as $\Delta(G)\le d\log^{120}n$, we see that
\[d(G)n<(d(G)-2\eta d)(n-|L|)+d\log^{120}n\cdot|L|,\]
implying that $|L|> n/\log^{121}n$. Let $Y$ be a maximal set of vertices in $L$ which are pairwise a distance at least $3\kappa+1$ apart. Suppose to the contrary $|Y|<d$. Since $d<\log^{200s}n$ and $\Delta(G)\le d\log^{120}n$, we have that $$|B^{3\kappa}(Y)|< 2d\Delta(G)^{3\kappa}<\log^{700s\kappa}n<n^{7/8}<n/\log^{121}n,$$ where the last inequality follows as $d_0$, also $n$, is sufficiently large. Thus there exists $v\in L\setminus B^{3\kappa}(Y)$, contradicting the maximality of $Y$.

Let then $v_1,v_2,\cdots,v_{\ell'}$ be such vertices in $Y$, which will serve as the branch vertices of the clique immersion. By choice, all the balls $B^{\kappa}_G(v_i)$, $i\in[\ell']$, are pairwise vertex disjoint. Let $I\subseteq\binom{[\ell']}{2}$ be a maximal subset for which we can find paths $P_{e}, e\in I$, so that the following hold.
\stepcounter{propcounter}
\begin{enumerate}[label = {\bfseries \Alph{propcounter}\arabic{enumi}}]
       \item\label{qingzhenguihuayu1} For each $\{i,j\}\in I$, $P_{ij}$ is a $(v_i, v_j)$-path with length at most $2\log^4n$.
       \item\label{qingzhenguihuayu2} For distinct $e, e'\in I$, the paths $P_e$ and $P_{e'}$ are edge disjoint.

       \item\label{qingzhenguihuayu3} For each $e\in I$ and $i\notin e$, $E(B^r_G(v_i))$ and $E(P_e)$ are disjoint.

       \item\label{qingzhenguihuayu4} For each $i\in[\ell']$, the subcollection $P_e$, $e\in I$ with $i\in e$, form edge-disjoint consecutive shortest paths from $v_i$ within $B_G^r(v_i)$.
\end{enumerate}
If $I=\binom{[\ell']}{2}$, then by \ref{qingzhenguihuayu2}, we have a $K_{\ell'}$-immersion with branch vertices $v_1,v_2,\cdots,v_{\ell'}$. Suppose there exists some $\{i,j\}\in \binom{[\ell']}{2}\setminus I$. Let $W=\bigcup_{e\in I}E(P_e)$. Then by \ref{qingzhenguihuayu1} and \ref{qingzhenguihuayu2}, we have
 \[|W|< \binom{\ell'}{2}\cdot 2\log^4n\le d^2\log^4n.\]
By~\ref{qingzhenguihuayu4}, we can apply Lemma \ref{inner} on $G$ with $t=r, ~D=d^2\log^7 n, ~K=\{v_i\}, ~Z=\varnothing$ and $E=\bigcup\limits_{i\in e\in I}E(P_e)$ to get that each inner ball robustly maintains size
\[|B^r_{G\setminus W}(v_i)|\ge \min \left\{D, ~|K|\cdot \left(1+\frac{\ep_1}{2\log^2(15D/\ep_2d)}\right)^{r-1} \right\}\ge d^2\log^7 n.\]
Furthermore,
by Lemma \ref{outer} with $X=B^r_{G\setminus W}(v_i)$ and $Y=\bigcup_{e\in I}V(P_e)$,  we have \[|B^{\kappa+r}_{G\setminus W}(v_i)|\ge |B^{\kappa}_{G-Y}(X)|\ge\exp(\sqrt[4]{\kappa})\ge\exp(\sqrt[5]{\log n})=:x.\] Similarly $|B^{\kappa+r}_{G\setminus W}(v_j)|\ge x$.
Let $W_{i,j}=W\cup(\bigcup_{p\neq i,j}E(B^r_{G}(v_{p})))$, that is, the set of all edges that are either used in
some connection or are in some inner ball of other branch vertices $v_p$. As we chose the vertices $v_i$ to
be pairwise at least a distance $3\kappa+1> 2\kappa + 2r$ apart, all the outer balls $B^{\kappa+r}_{
G\setminus W}(v_i)$ are pairwise vertex-disjoint. Note that \[|W_{i,j}|\le|W| + \De(G)\cdot 2(1-4\eta)d\cdot \De(G)^r<  d(G)\cdot\rho(x)x .\]
Therefore, Lemma \ref{lem-diameter} applied to $G$ with $X_1=B^{\kappa+r}_{
G\setminus W}(v_i),X_2=B^{\kappa+r}_{
G\setminus W}(v_j)$ and $F=W_{i,j}$, gives a ($B^{\kappa+r}_{
G\setminus W}(v_i),B^{\kappa+r}
_{G\setminus W}(v_j)$)-path in $G\setminus W_{i,j}$ with length at
most $m\le \log^4n$, which can be extended, within the balls $B^{\kappa+r}
_{G\setminus W}(v_i)$ and $B^{\kappa+r}
_{G\setminus W}(v_j)$, into a
$(v_i,v_j)$-path. Thus, if we let $P_{ij}$ be a shortest $v_i, v_j$-path in $G\setminus W_{i,j}$, then $P_{ij}$ has length at
most $\log^4n+2(\kappa+r)\le2\log^4n$. The path $P_{ij}$, together with all $P_e, e\in I$, satisfies the conditions \ref{qingzhenguihuayu1}--\ref{qingzhenguihuayu4} above, contradicting the maximality of $I$.
\end{proof}
\subsection{Finishing the proof of Lemma~\ref{lem-immersion-not-dense}}\label{subs43}

This subsection is devoted to finishing the proof of Lemma~\ref{lem-immersion-not-dense} when $d< \log^{200s} n$. As sketched in Section~\ref{subs22}, we divide the proof into two cases depending on the number of vertices of relatively large degree. In particular, let $Z_1=\{v\in V(G)\mid d(v)\ge dm^3\}$ be the set of high degree vertices. We first deal with the case that $|Z_1|\ge d$ (see Claim \ref{many-large}). If $|Z_1|<d$, then we show in Claim \ref{cl1} that the subgraph $G':=G-Z_1$ still has large average degree and we focus our attention to $G'$ which additionally has small maximum degree. We then find in $G'$ a collection of small $(\ep_1,\ep_2d)$-robust-expanders $F_1,F_2,\cdots,F_d$ which are pairwise far apart in $G'$ (see Claim~\ref{manyexp}). In Section~\ref{kernels}, we first grow, inside each subexpander $F_i$, a vertex $v_i$ robustly into a small ball $K_i$, called \emph{kernel} (See Claim~\ref{roubao}). In this case, each $K_i$, locally maintains certain robust expansion property inside $F_i$. Moreover, each kernel has large enough size so as to enjoy further expansion in the original expander $G$. Then Section~\ref{subs44} is devoted to building a desired clique immersion by further expanding (subsets of) each kernel into two large balls in $G'$,  and here we follow the strategy in the proof of Lemma~\ref{bounded max-degree}.

\subsubsection{Bounding the size of $Z_1$}\label{subs42}

The following claim builds a large clique immersion with many vertices of relatively large degree. Recall that $Z_1=\{v\in V(G)\mid d(v)\ge dm^3\}$.
\begin{claim}\label{many-large}
  If $|Z_1|\ge d$, then $G$ contains a $K_d$-immersion.
\end{claim}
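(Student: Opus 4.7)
The plan is to take any $d$ vertices $v_1,\ldots,v_d$ from $Z_1$ as the branch vertices of a prospective $K_d$-immersion, and to build pairwise edge-disjoint $(v_i,v_j)$-paths for every $\{i,j\}\in\binom{[d]}{2}$ greedily, one pair at a time, using the robust small-diameter property (Lemma~\ref{lem-diameter}). The key observation is that each chosen $v_k$ has degree at least $dm^3$, which dominates the total edge budget $O(d^2 m)$ ever consumed by the $\binom{d}{2}$ paths, leaving ample room at every step.

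I would process the pairs $\{i,j\}$ in any order. At the current step, let $F\subseteq E(G)$ denote the union of edges of all previously built paths, so that $|F|\le \binom{d}{2}(m+2)\le d^2 m$ once each such path is guaranteed to have length at most $m+2$. Since a simple path uses at most two edges at any single vertex, at most $2\binom{d}{2}\le d^2$ edges of $F$ are incident to any fixed $v_k$, and hence the ``fresh'' neighbourhood
\[
A_k:=\{w\in N_G(v_k):v_kw\notin F\}
\]
has size at least $dm^3-d^2\ge dm^3/2\ge \ep_2 d/2$ for $d$ sufficiently large.

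To connect $v_i$ and $v_j$, I would invoke Lemma~\ref{lem-diameter} on $G$ with $X_1=A_i$, $X_2=A_j$, forbidden vertex set $Y=\{v_1,\ldots,v_d\}$, and forbidden edge set $F$. With $x=\min\{|A_i|,|A_j|\}\ge dm^3/2$, the lower bound $\rho(x)\ge 1/m$ from~\eqref{eq-eps-not-small} gives $\rho(x)x/4\ge dm^2/8\ge |Y|$ and $d(G)\rho(x)x\ge d^2 m^2/2\ge |F|$, so the hypotheses of the lemma are met. It returns an $(A_i,A_j)$-path $P$ of length at most $m$ in $(G\setminus F)-Y$; letting $u_i\in A_i$ and $u_j\in A_j$ be its endpoints, I would prepend the edge $v_i u_i$ and append $u_j v_j$ (both absent from $F$ by the definition of $A_i,A_j$) to obtain a $(v_i,v_j)$-path $P_{ij}$ of length at most $m+2$ that is edge-disjoint from every previously built path. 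Note that $Y$ being avoided ensures $P$ is internally disjoint from all the branch vertices, so the concatenated $P_{ij}$ is a genuine simple path.

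There is no real obstacle here: the generous gap between the per-vertex degree $dm^3$ at $Z_1$-vertices and the cumulative edge budget $O(d^2 m)$ is precisely what lets the greedy scheme go through, and the $K_{s,t}$-freeness is not used in this sub-case. Iterating over all $\binom{d}{2}$ pairs yields the required $K_d$-immersion with branch vertices $v_1,\ldots,v_d$.
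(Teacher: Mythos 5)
Your overall strategy is exactly the paper's: pick $d$ vertices of degree at least $dm^3$ from $Z_1$, greedily join each pair through their still-unused neighbourhoods by Lemma~\ref{lem-diameter}, forbidding the previously used edges and the other branch vertices, and check that the total edge budget $O(d^2m)$ and the $d$ forbidden vertices fit under the thresholds $d(G)\rho(x)x$ and $\rho(x)x/4$ for $x=dm^3/2$. Those verifications are fine.

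There is, however, one step that fails as written: the estimate $|A_k|\ge dm^3-d^2\ge dm^3/2$. The second inequality requires $d\le m^3/2$, which is false in part of the regime where Claim~\ref{many-large} is invoked. The claim belongs to the sparse case $d<\log^{200s}n$, where $m=\frac{2}{\ep_1}\log^3(15n/\ep_2 d)=O(\log^4 n)$ while $d$ may be as large as $\log^{200s}n\ge\log^{400}n$; then $d\gg m^3$, so $d^2\gg dm^3$ and your bound would even allow $A_k=\varnothing$. The culprit is the crude count that each of the $\binom{d}{2}$ paths may use two edges at $v_k$. The repair is already contained in your own setup: since every connecting path is found in $(G\setminus F)-Y$ with $Y=\{v_1,\dots,v_d\}$, a path built for a pair not containing $v_k$ never visits $v_k$ at all, and a path for a pair containing $v_k$ uses exactly one edge at $v_k$, namely the pendant edge $v_ku_k$. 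Hence at most $d-1$ edges of $F$ are ever incident to $v_k$, giving $|A_k|\ge dm^3-d\ge dm^3/2$, which is precisely the bound the paper uses. With this correction the rest of your argument goes through verbatim.
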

\begin{proof}
 Let $v_1,v_2,\cdots,v_d$ be distinct vertices in $Z_1$. We shall construct a $K_{d}$-immersion with all $v_i$ as branch vertices in the following way, where we let $N_i=N_G(v_i)$ for $i\in[d]$ and $I=\{\{i,j\}\in \binom{[d]}{2}\mid v_iv_j\notin E(G)\}$.
\stepcounter{propcounter}
\begin{enumerate}[label = {\bfseries \Alph{propcounter}\arabic{enumi}}]
       \item\label{mianbao1} Greedily connect as many pairs $v_i,v_j$ with $\{i,j\}\in I$ as possible through an $(N_i,N_j)$-path of length at most $m$;
       \item\label{mianbao2} Let $(N_i,N_j)$ be the current pair to connect. Then avoid using edges that are used in previous connections and all other vertices $v_p$, $p\neq i,j$.
\end{enumerate}
Note that an $(N_i,N_j)$-path together with the corresponding incident edges forms a $(v_i,v_j)$-path, say $P_{i,j}$, of length at most $m+2<2m$.
Thus the total number of edges used in all connections is at most ${d\choose 2}\cdot 2m\le d^2m$. It remains to show that there exist, for all pairs $\{i,j\}\in I$, pairwise edge-disjoint paths $P_{i,j}$ as above. Indeed, throughout the process, each $v_i$ always has at least $dm^3-d\ge dm^3/2$ incident edges which are not used in all previous paths. Thus, each $v_i$ can still reach a set of at least
$dm^3/2$ neighbors in $N_i$, denoted by $N_i'$. Let $x:=dm^3/2$. Then there are at most $d^2m\le d(G)\cdot \rho(x)\cdot x $ edges and exactly $d\le \rho(x)\cdot x/4$ vertices we need to avoid in each connection. Thus any $\{i,j\}\in I$, by Lemma~\ref{lem-diameter} applied with $X_1=N_i', X_2=N_j',Y=Z_1$ and $F$ being the set of edges used in all previous paths, there exists an $(N_i,N_j)$-path of length at most $m$ as desired.
\end{proof}

Thus, it remains to consider the case $|Z_1|< d$. The following claim guarantees that the subgraph $G- Z_1$ still has large average degree, which allows us to restrict our attention to the subgraph $G- Z_1$.

\begin{claim}\label{cl1}
 $d(G-Z_1)\ge d(G)-\eta d$.
\end{claim}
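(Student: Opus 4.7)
The plan is to mimic the edge-counting strategy of Proposition~\ref{densi}, but exploit the much stronger upper bound $|Z_1|<d$ in place of the weaker bound $|Z|\le dm^{50}$ there. Note we cannot simply invoke Proposition~\ref{densi}, because in this sparse regime we only have $d<\log^{200s}n$, whereas that proposition requires $d\ge m^{50s}=\Theta(\log^{150s}n)$, which may fail. Instead, the aim is to bound directly the edges of $G$ incident to $Z_1$ using the $K_{s,t}$-free hypothesis, and then verify that the resulting loss of edges is absorbed by a $\eta d$ drop in the average degree.

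First I would split the edges incident to $Z_1$ into the bipartite part $e_{G}(Z_1,V(G)\setminus Z_1)$ and the internal part $e(G[Z_1])$. Since $G$ is $K_{s,t}$-free, Lemma~\ref{bipartite} applied to the bipartite subgraph between $Z_1$ and $V(G)\setminus Z_1$ gives
\[
e_{G}(Z_1,V(G)\setminus Z_1)\le c\,|Z_1|^{1-1/s}(n-|Z_1|)\le c\,d^{1-1/s}\,n,
\]
while trivially $e(G[Z_1])\le \binom{|Z_1|}{2}<d^2/2$. Writing $d(G-Z_1)=\tfrac{dn-2e_G(Z_1,V\setminus Z_1)-2e(G[Z_1])}{n-|Z_1|}$ and using $\tfrac{dn}{n-|Z_1|}\ge d$, this yields
\[
d(G-Z_1)\;\ge\; d-\frac{2cd^{1-1/s}n+d^2}{n-|Z_1|}.
\]

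It then remains to check the error term is at most $\eta d$. By \eqref{lower}, $n/d=\Omega(d^{1/(s-1)})$, so in particular $|Z_1|<d\le n/2$ and hence $n-|Z_1|\ge n/2$. Substituting gives an error bound of $4c\,d^{-1/s}+2d/n$ times $d$, and both terms are at most $\eta/2$ once $d\ge d_0$ is sufficiently large (the first follows from $d^{1/s}\ge 8c/\eta$, the second from \eqref{lower}). Combining these bounds yields $d(G-Z_1)\ge d-\eta d$, as required. I do not anticipate a serious obstacle: the argument is a routine edge-count, with the only subtle point being the need to avoid appealing to Proposition~\ref{densi} and instead use the tighter assumption $|Z_1|<d$ to keep the internal term $e(G[Z_1])\le d^2/2$ negligible against $\eta dn$.
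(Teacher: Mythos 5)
Your proposal is correct and follows essentially the same route as the paper's proof: bound the bipartite part $e(Z_1,V(G)\setminus Z_1)$ via Lemma~\ref{bipartite} using $|Z_1|<d$, bound the internal part trivially by $|Z_1|^2<d^2$, and observe both are $o(dn)$ (the paper phrases this with $O(\cdot)$ asymptotics, while you make the constants explicit, which is if anything cleaner). Your remark that Proposition~\ref{densi} is not applicable here because $d<\log^{200s}n$ may violate $d\ge m^{50s}$ is also the right reason the paper gives a separate argument.
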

\begin{proof}
  We may assume that $|Z_1|\ge \frac{\eta d}{2}$, otherwise we are done. It suffices to show that $e(G[Z_1])+e(Z_1,V(G)-Z_1)\le \eta dn/2$. Indeed, it is easy to see that $e(G[Z_1])\le |Z_1|^2=o(dn)$ because $d<\log^{200s}n$. Also, by $K_{s,t}$-freeness, Lemma \ref{bipartite} implies that $e(Z_1,V(G)\setminus Z_1)=O(d^{1-1/s}n)=o(dn)$.
\end{proof}

\subsubsection{Finding small dense expanders in $G-Z_1$}\label{manyexpander}
Let $G'=G-Z_1$. Then Claim \ref{cl1} tells that $d(G')\ge d(G)-\eta d$. We now proceed to find small expanders of large average degree that are pairwise far apart from each other in $G'$.
Recall that $\kappa=\lceil\log n/(800s\log\log n)\rceil$. Let $\mathcal{F}$ be a maximal family of subgraphs in $G'$ satisfying the following.

\stepcounter{propcounter}
\begin{enumerate}[label = {\bfseries \Alph{propcounter}\arabic{enumi}}]
  \item Each $F\in\mathcal{F}$ is an $(\ep_1,\ep_2d)$-robust-expander with $d(F)\ge (1-3\eta)d$.
  \item\label{kongxincai} For distinct $F,F'\in\mathcal{F}$, $B_{G'}^{\kappa}(V(F))\cap B_{G'}^{\kappa}(V(F'))=\varnothing$.
\end{enumerate}
For each $F\in\mathcal{F}$, let $n_F=|F|$, $m_F=\frac{2}{\ep_1}\log^3\left(\frac{15n_F}{\ep_2d}\right)$.  If $d(F)\ge \log^{200s}n_F$ or $\Delta(F)\le d\log^{120}n_F$, then by the proof of the dense case in Section $3$, or by Lemma \ref{bounded max-degree}, we have a clique immersion of order at least $(1-5\eta)d(F)\ge(1-9\eta)d$, finishing the proof. Thus we may assume that for each $F\in\mathcal{F}$, $$d(F)<\log^{200s}n_F, \quad \Delta(G')\ge\Delta(F)\ge d\log^{120}n_F.$$ Recall that $\Delta(G')\le dm^3\le d\log^{12}n$, therefore
\begin{equation}\label{xiangchiyu}
	\exp(\sqrt[200s]{d/2})~\le~ n_F~\le ~\exp(\sqrt[10]{\log n}).
\end{equation}

\begin{claim}\label{manyexp}
It holds that $|\mathcal{F}|\ge d$.
\end{claim}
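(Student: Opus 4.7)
The plan is to argue by contradiction: suppose $|\mathcal{F}| < d$, and exhibit one more $(\ep_1, \ep_2 d)$-robust-expander of average degree at least $(1-3\eta)d$ whose vertices are far (in $G'$) from all existing $V(F)$, $F \in \mathcal{F}$, thereby violating the maximality of $\mathcal{F}$. Let $W := \bigcup_{F \in \mathcal{F}} B^{2\kappa}_{G'}(V(F))$ be the ``forbidden'' vertex set. Any robust expander sitting inside $G'' := G' - W$ automatically has its $\kappa$-ball in $G'$ disjoint from every $B^{\kappa}_{G'}(V(F))$, hence would be a valid addition to $\mathcal{F}$ provided it also meets the average degree and robust expansion conditions.

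The crux is to show that $|W|$ is small enough to guarantee that $G''$ still has near-optimal average degree. Since we are in the case $|Z_1| < d$, every vertex of $G' = G - Z_1$ has $G'$-degree less than $dm^3 \leq d\log^{12} n$, so $\Delta(G') \leq d\log^{12}n$. Combining this with the upper bound $n_F \leq \exp(\sqrt[10]{\log n})$ from \eqref{xiangchiyu}, a straightforward ball-growth estimate gives
\[
|B^{2\kappa}_{G'}(V(F))| \;\leq\; 2 n_F \cdot \Delta(G')^{2\kappa} \;\leq\; n^{2/3}
\]
for $n$ large: indeed, $\log \Delta(G') \leq 212 s \log \log n$ and $2\kappa \leq \log n/(400 s \log\log n) + O(1)$, so $2\kappa \log \Delta(G') < 0.6 \log n$ in the sparse regime. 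Multiplying by $|\mathcal{F}| < d \leq \log^{200s} n = n^{o(1)}$, I obtain $|W| \leq n^{3/4}$, which is $o(n)$.

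Consequently, $|W| \Delta(G') \leq n^{3/4} \cdot d\log^{12} n = o(nd)$, so deleting $W$ from $G'$ removes only $o(nd)$ edges, giving
\[
d(G'') \;\geq\; d(G') - \eta d \;\geq\; (1-2\eta)d.
\]
I then apply Lemma~\ref{lem-expander} to $G''$ with $C = 40$ (so $\eta' := 40\ep_1/\log 3 \leq \eta$) to extract a subgraph $F_0 \subseteq G''$ that is an $(\ep_1, \ep_2 d(G''))$-robust-expander with $d(F_0) \geq (1-\eta')d(G'') \geq (1-3\eta)d$. Since $V(F_0) \subseteq V(G')\setminus W$, its $G'$-distance from every $V(F)$, $F \in \mathcal{F}$, exceeds $2\kappa$, so $B^{\kappa}_{G'}(V(F_0))$ is disjoint from each $B^{\kappa}_{G'}(V(F))$, and $F_0$ violates the maximality of $\mathcal{F}$.

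The main obstacle is twofold. First, verifying that $|W| = o(n)$ in the sparse regime requires the bounds on $n_F$, $\Delta(G')$, and $\kappa$ to combine favorably; this is where the particular choice $\kappa = \lceil \log n/(800 s \log\log n) \rceil$, which is tuned so that $\Delta(G')^{2\kappa}$ stays well below $n$, is essential. Second, Lemma~\ref{lem-expander} applied to $G''$ produces an $(\ep_1, \ep_2 d(G''))$-robust-expander rather than an $(\ep_1, \ep_2 d)$-robust-expander; since $d(G'') \geq (1-2\eta)d$, this mismatch is absorbed by invoking the lemma with a rescaled $\ep_2' := \ep_2 d/d(G'') \in (0, 1/2)$, which then delivers exactly the parameter required to join $\mathcal{F}$.
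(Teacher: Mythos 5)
Your proposal is correct and follows essentially the same route as the paper: delete the union of the $2\kappa$-balls around the existing expanders, check via the $\Delta(G')\le dm^3$ and $n_F\le\exp(\sqrt[10]{\log n})$ bounds that this removes only $o(nd)$ edges, and reapply Lemma~\ref{lem-expander} to contradict maximality. Your explicit handling of the $\ep_2$ rescaling so that the new subgraph is an $(\ep_1,\ep_2 d)$-robust-expander (rather than one with respect to $d(G'')$) is a detail the paper passes over silently, and it is a worthwhile addition.
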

\begin{proof}
  Otherwise, let $U=\bigcup_{F\in\mathcal{F}}B_{G'}^{2\kappa}(V(F))$. Then $$|U|\le |\cF|\cdot 2\max_{F\in\cF}n_F\cdot \De(G')^{2\kappa}\le d\exp(\sqrt[10]{\log n})(dm^3)^{2\kappa}<n/m^4$$ and it follows that $d(G'-U)\ge d(G')-2|U|\cdot dm^3/n\ge (1-2\eta)d$. Lemma \ref{lem-expander} implies that $G'-U$ contains an $(\ep_1,\ep_2d)$-robust-expander $F'$ with $d(F')\ge (1-3\eta)d$, which, by choice, is far from all expanders in $\cF$ at a distance of at least $2\kappa+1$, contradicting the maximality of $\mathcal{F}$.
\end{proof}

\subsubsection{Kernels in subexpanders}\label{kernels}

As shown in Claim~\ref{manyexp}, $\mathcal{F}$ contains at least $d$ expanders, say $F_1,F_2,\cdots, F_d$. Since each $F_i$ satisfies $d(F_i)\ge (1-3\eta)d$, we can choose distinct vertices $v_1,v_2,\cdots,v_d$ as the branch vertices such that $v_i\in V(F_i)$ and $d_{F_i}(v_i)\ge (1-3\eta)d, i\in[d]$. 
Note that for sufficiently large $d$, by~\eqref{xiangchiyu}, each small expander $F_i
$ has at least $\exp(\sqrt[200s]{d/2})\ge d^2$ vertices. For each $v_i$, we shall show that, it expands robustly in $F_i$ as follows.
\begin{claim}\label{roubao}
Let $P_1,P_2,\cdots, P_q$ be edge-disjoint consecutive shortest
paths from $v_i$ in $F_i$ for any $q<(1-4\eta)d$ and denote $E=\bigcup_{j\in [q]}E(P_j)$. Then it holds that $|B^{s}_{F_i\setminus E}(v_i)|\ge d^2$ for any integer $s\ge \log^4d$.
\end{claim}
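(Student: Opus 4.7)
The plan is to apply Lemma~\ref{inner} to the small subexpander $F_i$ with $K=\{v_i\}$ and $Z=\varnothing$. The reason this works is that $v_i$ was chosen with near-maximum degree $d_{F_i}(v_i)\ge(1-3\eta)d$: even after deleting the first edge of each of the $q<(1-4\eta)d$ given shortest paths, the first sphere around $v_i$ in $F_i\setminus E$ still has size at least $d_{F_i}(v_i)-q\ge\eta d\ge\ep_2 d$, comfortably above the expansion threshold of $F_i$ as an $(\ep_1,\ep_2 d)$-robust-expander. From there, the robust exponential growth guaranteed by Lemma~\ref{inner} takes over and inflates the ball up to the target size~$d^2$.

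To verify the hypotheses, by~\eqref{xiangchiyu} we have $n_{F_i}\ge\exp(\sqrt[200s]{d/2})\ge 2d^2$ for $d$ large, giving $|K|=1<n_{F_i}/2$ and the target $D:=d^2\le n_{F_i}/2$. The bound $|Z|=0\le|K|\rho(|K|)/4$ is trivial. Since $\eta\ge 5\ep_2$,
\[
|N_{F_i}(v_i)|\ge(1-3\eta)d\ge(1-4\eta)d+\ep_2 d=\ell'+\ep_2 d,
\]
and the consecutive shortest-path hypothesis with $q<\ell'=(1-4\eta)d$ is part of the statement. Applying Lemma~\ref{inner} with $t=s$ and $D=d^2$ yields
\[
|B^{s}_{F_i\setminus E}(v_i)|\ge\min\!\left\{d^2,\ \Bigl(1+\tfrac{\ep_1}{2\log^2(15d/\ep_2)}\Bigr)^{s-1}\right\}.
\]
As $s\ge\log^4 d$ while $\log^2(15d/\ep_2)=O(\log^2 d)$, the exponential factor is at least $\exp(\Omega(\log^2 d))$, which dominates $d^2$ once $d$ is sufficiently large, so the minimum equals $d^2$, as required.

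The main subtlety, rather than any serious obstacle, is a parameter-matching bookkeeping issue: Lemma~\ref{inner} is stated for an ambient expander of average degree exactly $d$, whereas $d(F_i)$ may differ from $d=d(G)$. Tracing the proof of Lemma~\ref{inner}, the only place the ambient degree is used is in the inductive edge-budget inequality $|E_p|<d(F_i)\rho(|X_p|)|X_p|$; this survives because $d(F_i)\ge(1-3\eta)d$ while the shortest-path budget gives $|E_p|\le q(p+1)<(1-4\eta)d(p+1)$, leaving a $(1-4\eta)/(1-3\eta)$-factor of slack. Likewise, the $3\eta d$ slack in the neighbourhood hypothesis easily absorbs the $\ep_2 d$ threshold needed to bootstrap the first expansion step. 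Hence the proof of Lemma~\ref{inner} transfers verbatim to the present setting.
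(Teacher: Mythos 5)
Your proof is correct and follows essentially the same route as the paper: both apply Lemma~\ref{inner} to $F_i$ with $K=\{v_i\}$, $Z=\varnothing$, $D=d^2$ and $t=s$ (the paper uses $t=\log^4 d$), using $d_{F_i}(v_i)\ge(1-3\eta)d\ge\ell'+\ep_2 d$ to verify the neighbourhood hypothesis. Your additional remark about the mismatch between $d(F_i)$ and $d=d(G)$ in the edge-budget inequality is a legitimate subtlety that the paper silently glosses over, and your resolution of it is sound.
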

\begin{proof}
  To see this, as $d_{F_i}(v_i)\ge (1-3\eta)d\ge \ell'+\ep_2d$, by applying Lemma \ref{inner} with $G=F_i$, $t=\log^4d$, $D=d^2, K=\{v_i\}$ and $Z=\varnothing$, we have for large $d$ that \[|B^t_{F_i\setminus E}(v_i)|\ge \min \left\{D, ~ \left(1+\frac{\ep_1}{2\log^2(15D/\ep_2d)}\right)^{t-1} \right\}\ge d^2.\]
\end{proof}

For each $i\in[d]$ and $s=\log^4d$, we call $K_i=B^{s}_{F_i}(v_i)$ the \emph{kernel} for $v_i$. In Claim~\ref{roubao}, we see that the vertex $v_i$ locally expand robustly in $F_i$. Recall that $r=\lceil(\log\log n)^5\rceil$. By~\ref{kongxincai}, we observe that all balls $B^{r}_{G'}(K_i)$ are pairwise disjoint.

\begin{claim}\label{roubao1}
Let $K'\subseteq K_i$ be a subset of size at least $d^2$ and $P_1,P_2,\cdots, P_q$ be edge-disjoint consecutive shortest
paths from $K'$ within $B^{r}_{G'}(K')$ in $G'$ such that $q<\ell'$ and denote $E=\bigcup_{j\in [q]}E(P_j)$. Then $|B^r_{G'\setminus E}(K')|\ge d^2\log^7 n.$
\end{claim}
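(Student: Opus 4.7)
The plan is to invoke Lemma \ref{inner} on the original robust expander $G$ with $Z := Z_1$ (so $G - Z = G'$), $K := K'$, $t := r$, and $D := d^2\log^7 n$.

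First I would verify the hypotheses of Lemma \ref{inner}. Since $K' \subseteq K_i \subseteq V(F_i)$ and by \eqref{xiangchiyu} $|V(F_i)| \le \exp(\sqrt[10]{\log n}) \ll n/2$, we have $|K'| < n/2$. By Claim \ref{many-large} we may assume $|Z_1| < d$. Using the monotonicity of $x \mapsto \rho(x)x$ on $[\ep_2 d/2, n]$ together with $|K'| \ge d^2$ gives $|K'|\rho(|K'|) \ge d^2\rho(d^2) = \Omega(d^2/\log^2 d)$; hence for $d$ large enough, $|Z_1| < d \le |K'|\rho(|K'|)/4$, and moreover $|N_{G'}(K')| \ge |N_G(K')| - |Z_1| \ge \rho(|K'|)|K'| - d \ge \ell' + \ep_2 d$.

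A subtle point is that in Claim \ref{roubao1} the paths $P_i$ are consecutive shortest paths only within $B^r_{G'}(K')$, whereas Lemma \ref{inner} is formally stated for paths shortest within the whole of $G - Z = G'$. However the only place the shortest-path property is invoked in the proof of Lemma \ref{inner} is to show $|E_p \cap E(P_i)| \le p+1$, via a shortcut that concatenates a length-$\le p$ path in $G'\setminus E$ from $K'$ into $X_p := B^p_{G'\setminus E}(K')$ with a tail of $P_i$. Since $X_p \subseteq B^p_{G'}(K') \subseteq B^r_{G'}(K')$ for every $p \le r-1$, this shortcut lies entirely in $B^r_{G'}(K')$ and would still contradict $P_i$ being shortest inside $G'[B^r_{G'}(K')] \setminus \bigcup_{j<i}E(P_j)$. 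Thus the proof of Lemma \ref{inner} applies verbatim to the present setting.

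Lemma \ref{inner} then gives
\[
    |B^r_{G'\setminus E}(K')| \ge \min\Bigl\{D,\ |K'|\cdot\bigl(1+\tfrac{\ep_1}{2\log^2(15D/\ep_2 d)}\bigr)^{r-1}\Bigr\}.
\]
Since we are in the sparse regime $d < \log^{200s} n$, $D = d^2\log^7 n < \log^{400s+7} n$, so $\log^2(15D/\ep_2 d) = O(\log^2\log n)$. With $r = \lceil(\log\log n)^5\rceil$, the iterated factor is at least $\exp\bigl(\Omega(\log^3\log n)\bigr)$, which eventually dominates the $\log^7 n$ gap between $|K'| \ge d^2$ and $D$. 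Hence the minimum is $D$, giving $|B^r_{G'\setminus E}(K')| \ge d^2\log^7 n$, as required. The main subtlety I expect is the verification that Lemma \ref{inner} really does adapt to paths shortest only inside $B^r_{G'}(K')$ rather than all of $G'$; beyond that the argument is just the numerical check above.
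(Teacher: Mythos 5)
Your proposal is correct and follows exactly the paper's route: apply Lemma \ref{inner} to $G$ with $K=K'$, $Z=Z_1$, $t=r$ and $D=d^2\log^7 n$, after checking $|Z_1|<d\le |K'|\rho(|K'|)/4$ and $|N_{G'}(K')|\ge \ell'+\ep_2 d$. Your extra discussion of why the shortest-path property within $B^r_{G'}(K')$ (rather than within all of $G'$) still supports the bound $|E_p\cap E(P_i)|\le p+1$ is a legitimate technical point that the paper silently elides, and your resolution of it is right.
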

 Indeed, such $K'$ satisfies $|K'|\cdot \rho(|K'|)/4\ge d>|Z_1|$ and then $N_{G'}(K')\ge d\ge \ell'+\ep_2 d$. Therefore, Claim~\ref{roubao1} follows from Lemma \ref{inner} applied to $G$ with $t=r, D=d^2\log^7 n, K=K'$ and $Z=Z_1$.

\subsubsection{Building a clique immersion}\label{subs44}
We are now ready to build a clique immersion by iteratively finding edge disjoint paths connecting all pairs of branch vertices $v_1,v_2,\cdots,v_{\ell'}$. To do this, we will further expand each kernel $K_i$ in two stages (to $B_{G'}^r(K_i)$ and then $B_{G'}^{\kappa+r}(K_i)$ as depicted in Figure~\ref{suanrongshanbei}).

\begin{figure}[h]
	\centering
	\includegraphics[width=11.5cm]{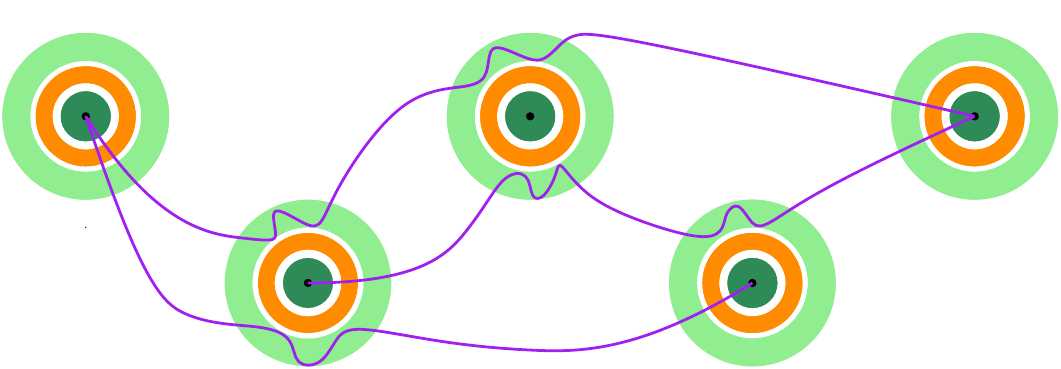}\\
	\caption{Each $v_i$ is surrounded by three layers: the kernel $K_i=B^{s}_{F_i}(v_i)$ for $s=\log^4 d$, the inner ball $B^r_{G'}(K_i)$ (the orange one) and the outer ball $B^{\kappa+r}_{G'}(K_i)$. }\label{suanrongshanbei}
\end{figure}

By property~\ref{kongxincai}, all the balls $B^{\kappa}_{G'}(K_i)$ are pairwise disjoint, $i\in[\ell']$. Let $I\subset\binom{[\ell']}{2}$ be a maximal subset for which we can find pairwise edge-disjoint paths $P_{e}, e\in I$, so that
\stepcounter{propcounter}
\begin{enumerate}[label = {\bfseries \Alph{propcounter}\arabic{enumi}}]
     \item\label{woxianghetang30} for each $e=\{i,j\}\in I$, $P_e$ is a $(v_i, v_j)$-path with length at most $2\log^4n$;
     \item\label{woxianghetang3} for each $e\in I$, $P_e$ is disjoint from the ball $B^r_{G'}(K_i)$ for any $i\notin e$;
       \item\label{woxianghetang4} for each $i\in[\ell']$, the subcollection $P_e$, $e\in I$ with $i\in e$, form edge-disjoint consecutive shortest paths from $v_i$ within $K_i$, and $P_e-K_i$, $e\in I$ with $i\in e$, are edge-disjoint consecutive shortest paths from $K_i$ within $B_{G'}^{r}(K_i)$.
\end{enumerate}

We may assume there is $\{i,j\}\in \binom{[\ell']}{2}\setminus I$ for otherwise all paths $P_e$, $e\in I$, would form a desired clique immersion with $v_1,v_2,\cdots,v_{\ell'}$ as branch vertices. We will reach a contradiction by finding a $(v_i,v_j)$-path that is short and additionally avoids all the edges used in previous connections and all vertices in the inner balls of other branch vertices $v_{p}, p\neq i,j$ as in \ref{woxianghetang3}.

Let $W=\bigcup_{e\in I}E(P_e)$ and $U=\bigcup_{e\in I}V(P_e)$, i.e. the sets of edges and vertices used in all previous connections, respectively. Then $|W|,|U|\le  d^2\log^4n$. By~\ref{woxianghetang30},~\ref{woxianghetang3} and~\ref{woxianghetang4}, Claim~\ref{roubao} implies that for $i\in [\ell']$, $|B^{s}_{F_i\setminus W}(v_i)|\ge d^2.$ Let $K_i'=B^{s}_{F_i\setminus W}(v_i)$, $i\in [\ell']$. Then it follows from \ref{woxianghetang3}, \ref{woxianghetang4} and Claim~\ref{roubao1} that $|B^r_{G'\setminus W}(K_i')|\ge d^2\log^7 n$.

Next, by Lemma \ref{outer} with $X=B^r_{G'\setminus W}(K_i'), Y=(U\setminus\{v_i\})\cup Z_1$, we have $$|B^{\kappa+r}_{G'\setminus W}(K_i')|\ge|B^{\kappa}_{G-Y}(X)|\ge\exp(\sqrt[4]{\kappa})\ge \exp(\sqrt[5]{\log n})=:x.$$
Let $U^*=\bigcup_{p\neq i,j}B^r_{G'}(K_p)$. As we choose the kernels $K_i$ to
be pairwise at least a distance $2\kappa+1> \kappa+ 2r$ apart, both $B^{\kappa+r}_{
G'\setminus W}(K_i')$ and $B^{\kappa+r}
_{G'\setminus W}(K_j')$ are disjoint
from $U^*$. Recall from~(\ref{xiangchiyu}) that $|K_i|\le |F_i|\le \exp(\sqrt[10]{\log n})$, for each $i\in[\ell']$ and $\Delta(G')\le dm^3$. Thus the total number of vertices we avoid in~\ref{woxianghetang3}  is \[|U^*|\le(1-4\eta)d\cdot2\exp(\sqrt[10]{\log n})(dm^3)^r\le \rho(x)x/4.\]
Moreover, the number of edges we avoid in \ref{woxianghetang3} is $|W|\le d^2\log^4n\le d(G)\rho(x)x $.
Since $|B^{\kappa+r}
_{G'\setminus W}(K_i')|, \\~|B^{\kappa+r}_{
G'\setminus W}(K_j')|\ge x,$ by Lemma~\ref{lem-diameter} with \[X_1=B^{\kappa+r}
_{G'\setminus W}(K_i'),~X_2=B^{\kappa+r}_{
G'\setminus W}(K_j'),~Y=U^*~\text{and}~F=W,\] there exists a ($B^{\kappa+r}_{
G'\setminus W}(K_i'),B^{\kappa+r}
_{G'\setminus W}(K_j')$)-path, say $Q_{ij}$, of length at most $m$ in $(G-U^*)\setminus W$. It is easy to observe that $Q_{ij}$ can be extended, first within the ball $B^{\kappa+r}
_{G'\setminus W}(K_i')$ (or $B^{\kappa+r}
_{G'\setminus W}(K_i')$) and then within $K_i'=B^{s}_{F_i\setminus W}(v_i)$ (or $K_j'$), into a
$(v_i,v_j)$-path, in which we denote by $P_{ij}$ such a shortest $(v_i, v_j)$-path in $(G-U^*)\setminus W$. Then $P_{ij}$ always has length at
most $m+2\kappa+2r+2s\le2\log^4n$, which together with the paths $P_e, e\in I$ satisfy~\ref{woxianghetang30},~\ref{woxianghetang3} and \ref{woxianghetang4}, contradicting the maximality of $I$.\medskip

This completes the proof.

\section*{Acknowledgements}
We thank the referee for the careful readings and suggestions that improve the presentation.
\bibliographystyle{abbrv}
\bibliography{ref}


\end{document}